\DeclareMathOperator*{\argmin}{argmin}
\newcommand{\R}{\mathbb{R}}
\newcommand{\mc}{\mathcal}
\newcommand{\DS}{\displaystyle}
\newcommand{\st}{\mathrm{s.t.}}
\newcommand{\mbb}{\mathbb}
\theoremstyle{definition}
\newtheorem{theorem}{Theorem}[section]
\newtheorem{corollary}[theorem]{Corollary}
\newtheorem{lemma}[theorem]{Lemma}
\newtheorem{remark}[theorem]{Remark}
\newtheorem{example}[theorem]{Example}
\author[$\dagger$]{Bahar Ta{\c{s}}kesen}
\author[$\ddagger$]{Soroosh Shafieezadeh-Abadeh}
\author[$\dagger$]{Daniel Kuhn}
\author[$\mathsection$]{Karthik Natarajan}
\affil[$\dagger$]{Risk Analytics and Optimization Chair, EPFL Lausanne \authorcr \texttt{bahar.taskesen,daniel.kuhn@epfl.ch}}
\affil[$\ddagger$]{Tepper School of Business, CMU \authorcr \texttt{sshafiee@andrew.cmu.edu}}
\affil[$\mathsection$]{{Engineering Systems and Design,
Singapore University of Technology and Design} \authorcr\texttt{karthik$\_$natarajan@sutd.edu.sg}}
\date{}
\begin{document}
\title{Discrete Optimal Transport with Independent Marginals is $\#$\textbf{P}-Hard}

\maketitle

\begin{abstract}

    We study the computational complexity of the optimal transport problem that evaluates the Wasserstein distance between the distributions of two $K$-dimensional discrete random vectors. The best known algorithms for this problem run in polynomial time in the maximum of the number of atoms of the two distributions. However, if the components of either random vector are independent, then this number can be exponential in~$K$ even though the size of the problem description scales linearly with~$K$. We prove that the described optimal transport problem is $\#$\textbf{P}-hard even if all components of the first random vector are independent uniform Bernoulli random variables, while the second random vector has merely two atoms, and even if only approximate solutions are sought. We also develop a dynamic programming-type algorithm that approximates the Wasserstein distance in pseudo-polynomial time when the components of the first random vector follow arbitrary independent discrete distributions, and we identify special problem instances that can be solved exactly in strongly polynomial time.
\end{abstract}

\section{Introduction}
Optimal transport theory is closely intertwined with probability theory and statistics~\citep{boucheron2013concentration, villani} as well as with economics and finance~\citep{galichon2016optimal}, and it has spurred fundamental research on partial differential equations~\citep{benamou2000computational, brenier1991polar}. In addition, optimal transport problems naturally emerge in numerous application areas spanning machine learning \citep{arjovsky2017wasserstein, carriere2017sliced, rolet2016fast}, signal processing~\citep{ferradans2014regularized, kolouri2015transport, papadakis2017convex, tartavel2016wasserstein}, computer vision~\citep{rubner2000earth, solomon2014earth, solomon2015convolutional} and distributionally robust optimization~\citep{blanchet2019quantifying, gao2016distributionally, esfahani2018data}. For a comprehensive survey of modern applications of optimal transport theory we refer to~\citep{kolouri2017optimal, peyre2019computational}. Historically, the first optimal transport problem was formulated by
Gaspard Monge as early as in~1781 \citep{monge1781memoire}. Monge's formulation aims at finding a measure-preserving map that minimizes some notion of transportation cost between two probability distributions, where all probability mass at a given origin location must be transported to the same target location. 
Due to this restriction, an optimal transportation map is not guaranteed to exist in general, and Monge's problem could be infeasible. In 1942, Leonid Kantorovich formulated a convex relaxation of Monge's problem by introducing the notion of a transportation plan that allows for mass splitting~\citep{kantorovich1942transfer}. Interestingly, an optimal transportation plan always exists. This paradigm shift has served as a catalyst for significant progress in the field.

In this paper we study Kantrovich's optimal transport problem between two discrete distributions
\[
    \mu = \sum_{i \in \mathcal I} \mu_i \delta_{\bm x_i}\quad \text{and}\quad \nu = \sum_{j \in \mathcal J} \nu_j \delta_{\bm y_j},
\]
on $\mathbb R^K$, where $\bm \mu\in\mathbb R^I$ and $\bm \nu\in\mathbb R^J$ denote the probability vectors, whereas $\bm x_i \in \mathbb R^K$ for $i \in \mathcal I = \{1, \dots, I\}$ and $\bm y_j \in \mathbb R^K$ for $j \in \mathcal J = \{1, \dots, J\}$ represent the discrete support points of~$\mu$ and~$\nu$, respectively. Throughout the paper we assume that $\mu$ and $\nu$ denote the probability distributions of two $K$-dimensional discrete random vectors $\bm x$ and $\bm y$, respectively.
Given a transportation cost function $c: \mathbb R^K \times \mathbb R^K \to [0,+\infty]$, we define the optimal transport distance between the discrete distributions~$\mu$ and~$\nu$ as
\begin{equation}
W_c(\mu, \nu) = \min\limits_{\bm \pi \in \Pi(\bm \mu,\bm \nu)} ~ \sum_{i \in \mathcal I} \sum_{j \in \mathcal J} c(\bm x_i, \bm y_j) \pi_{ij},
\label{eq:primal}
\end{equation}
where $\Pi(\bm \mu,\bm \nu) = \{ \bm \pi \in \mathbb R^{I \times J}_+: \bm \pi \bm 1 = \bm \mu,\; \bm \pi^\top \bm 1 = \bm \nu \}$ denotes the polytope of probability matrices~$\bm \pi$ with marginal probability vectors $\bm \mu$ and $\bm \nu$. Thus, every $\bm \pi \in \Pi(\bm \mu, \bm \nu)$ defines a discrete probability distribution
\[
    \pi = \sum_{i \in \mathcal I} \sum_{j \in \mathcal J} \pi_{ij} \delta_{(\bm x_i, \bm y_j)}
\]
of~$(\bm x,\bm y)$ under which~$\bm x$ and~$\bm y$ have marginal distributions $\mu$ and $\nu$, respectively. Distributions with these properties are referred to as transportation plans. If there exists $p \geq 1$ such that~$c(\bm x, \bm y)= \|\bm x- \bm y\|^p$ for all~$\bm x, \bm y\in\mathbb R^K$, then $W_c(\mu, \nu)^{1/p}$ is termed the $p$-th Wasserstein distance between $\mu$ and $\nu$. The optimal transport problem~\eqref{eq:primal} constitutes a linear program that admits a strong dual linear program of the form
\begin{equation*}
\begin{array}{c@{\quad}l@{\qquad}l}
\max & \bm \mu^\top \bm \psi + \bm \nu^\top \bm \phi & \\ [0.5em]
\st & \bm \psi \in \mathbb R^I, ~ \bm \phi \in \mathbb R^J & \\ [0.5em]
& \psi_i + \phi_j \leq c(\bm x_i, \bm y_j) \quad \forall i \in \mathcal I, j \in \mathcal J.
\end{array}
\end{equation*}
Strong duality holds because $\bm \pi = \bm \mu \bm \nu^\top$ is feasible in~\eqref{eq:primal} and the optimal value is finite. Both the primal and the dual formulations of the optimal transport problem can be solved exactly using the simplex algorithm~\citep{dantzig1951application}, the more specialized network simplex algorithm~\citep{orlin1997polynomial} or the Hungarian algorithm~\citep{kuhn1955hungarian}. Both problems can also be addressed with dual ascent methods~\citep{bertsimas1997introduction}, customized auction algorithms \citep{bertsekas1981new, bertsekas1992auction} or interior point methods~\citep{karmarkar1984new, lee2014path, nesterov1994interior}.
More recently, the emergence of high-dimensional optimal transport problems in machine learning has spurred the development of efficient approximation algorithms. Many popular approaches for approximating the optimal transport distance between two discrete distributions rely on solving a regularized variant of problem~\eqref{eq:primal}. For instance, when augmented with an entropic regularizer, problem~\eqref{eq:primal} becomes amenable to greedy methods such as the Sinkhorn algorithm~\citep{sinkhorn1967diagonal, cuturi2013sinkhorn} or the related Greenkhorn algorithm~\citep{abid2018greedy, altschuler2017near, chakrabarty2018better}, which run orders of magnitude faster than the exact methods. Other promising regularizers that have attracted significant interest include the Tikhonov~\citep{blondel2017smooth, dessein2018regularized, essid2018quadratically, seguy2017large}, Lasso~\citep{li2016fast}, Tsallis entropy~\citep{muzellec2017tsallis} and group Lasso regularizers~\citep{courty2016optimal}. In addition, Newton-type methods~\citep{blanchet2018towards, quanrud2018approximating}, quasi-Newton methods \citep{blondel2017smooth}, primal-dual gradient methods \citep{dvurechensky2018computational, guo2020fast, jambulapati2019direct, lin2019efficient, lin2019acceleration}, iterative Bregman projections \citep{benamou2015iterative} and stochastic average gradient descent algorithms \citep{genevay2016stochastic} are also used to find approximate solutions for discrete optimal transport problems. 

The existing literature mainly addresses optimal transport problems between discrete distributions that are specified by enumerating the locations and the probabilities of the underlying atoms. In this case, the worst-case time-complexity of solving the linear program~\eqref{eq:primal} with an interior point algorithm, say, grows polynomially with the problem's input description. 
In contrast, we focus here on optimal transport problems between discrete distributions supported on a number of points that grows {\em exponentially} with the dimension~$K$ of the sample space even though these problems admit an input description that scales only {\em polynomially} with~$K$. In this case, the worst-case time-complexity of solving the linear program~\eqref{eq:primal} directly with an interior point algorithm grows exponentially with the problem's input description. More precisely, we henceforth assume that~$\mu$ is the distribution of a random vector~$\bm x\in\mathbb R^K$ with independent components. Hence, $\mu$ is uniquely determined by the specification of its $K$ marginals, which can be encoded using~$\mathcal O(K)$ bits. Yet, even if each marginal has only two atoms, $\mu$~accommodates already $2^K$~atoms. Optimal transport problems involving such distributions are studied by \cite{ccelik2021wasserstein} with the aim to find a discrete distribution with independent marginals that minimizes the Wasserstein distance from a prescribed discrete distribution. While~\cite{ccelik2021wasserstein} focus on solving small instances of this nonconvex problem, our results surprisingly imply that even evaluating this problem's objective function is hard. In summary, we are interested in scenarios where  
the discrete optimal transport problem~\eqref{eq:primal} constitutes a linear program with exponentially many variables and constraints. We emphasize that such linear programs are not necessarily hard to solve \citep{grotschel2012geometric}, and therefore a rigorous complexity analysis is needed. We briefly review some useful computational complexity concepts next.

Recall that the complexity class \textbf{P} comprises all decision problems ({\em i.e.}, problems with a Yes/No answer) that can be solved in polynomial time. In contrast, the complexity class \textbf{NP} comprises all decision problems with the property that each `Yes' instance admits a certificate that can be verified in polynomial time. A problem is \textbf{NP}-hard if every problem in \textbf{NP} is polynomial-time reducible to it, and an \textbf{NP}-hard problem is \textbf{NP}-complete if it belongs to \textbf{NP}. In this paper we will mainly focus on the complexity class $\#$\textbf{P}, which encompasses all counting problems associated with decision problems in~\textbf{NP}~\citep{valiant1979complexity,valiant1979complexitypermanent}. Loosely speaking, an instance of a $\#$\textbf{P} problem thus counts the number of distinct polynomial-time verifiable certificates of the corresponding~\textbf{NP} instance. Consequently, a $\#$\textbf{P} problem is at least as hard as its \textbf{NP} counterpart, and~$\#$\textbf{P} problems cannot be solved in polynomial time unless $\#$\textbf{P} coincides with the class~\textbf{FP} of polynomial-time solvable function problems. A Turing reduction from a function problem~$A$ to a function problem~$B$ is an algorithm for solving problem~$A$ that has access to a fictitious oracle for solving problem~$B$ in one unit of time. Note that the oracle plays the role of a subroutine and may be called several times. A polynomial-time Turing reduction from~$A$ to~$B$ runs in time polynomial in the input size of~$A$. We emphasize that, even though each oracle call requires only one unit of time, the time needed for computing all oracle inputs and reading all oracle outputs is attributed to the runtime of the Turing reduction. A problem is $\#$\textbf{P}-hard if every problem in $\#$\textbf{P} is polynomial-time Turing reducible to it, and a $\#$\textbf{P}-hard problem is $\#$\textbf{P}-complete if it belongs to $\#$\textbf{P} \citep{valiant1979complexitypermanent, jerrum2003counting}.


Several hardness results for variants and generalizations of the optimal transport problem have recently been discovered. For example, multi-marginal optimal transport and Wasserstein barycenter problems were shown to be \textbf{NP}-hard \citep{altschuler2020hardness, altschuler2021wasserstein}, whereas the problem of computing the Wasserstein distance between a continuous and a discrete distribution was shown to be $\#$\textbf{P}-hard even in the simplest conceivable scenarios~\citep{taskesen2021semi}. In this paper, we focus on optimal transport problems between two discrete distributions~$\mu$ and~$\nu$. We formally prove that such problems are also $\#$\textbf{P}-hard when~$\mu$ and/or~$\nu$ may have independent marginals. Specifically, we establish a fundamental limitation of all numerical algorithms for solving optimal transport problems between discrete distributions~$\mu$ and~$\nu$, where~$\mu$ has independent marginals. We show that, unless~$\textbf{FP}=\#\textbf{P}$, it is not possible to design an algorithm that approximates~$W_c(\mu,\nu)$ in time polynomial in the bit length of the input size (which scales only polynomially with the dimension~$K$) and the bit length~$\log_2(1/\varepsilon)$ of the desired accuracy~$\varepsilon>0$. This result prompts us to look for algorithms that output $\varepsilon$-approximations in {\em pseudo-polynomial time}, that is, in time polynomial in the input size, the magnitude of the largest number in the input and the inverse accuracy~$1/\varepsilon$. It also prompts us to look for special instances of the optimal transport problem with independent marginals that can be solved in {\em weakly} or {\em strongly polynomial time}. An algorithm runs in weakly polynomial time if it computes~$W_c(\mu,\nu)$ in time polynomial in the bit length of the input. Similarly, an algorithm runs in strongly polynomial time if it computes~$W_c(\mu,\nu)$ in time polynomial in the bit length of the input and if, in addition, it requires a number of arithmetic operations that grows at most polynomially with the dimension of the input ({\em i.e.}, the number of input numbers).

The key contributions of this paper can be summarized as follows.
\begin{itemize}[leftmargin=6mm]
    \item We prove that the discrete optimal transport problem with independent marginals is $\#$\textbf{P}-hard even if~$\mu$ represents the uniform distribution on the vertices of the $K$-dimensional hypercube and $\nu$ has only two support points, and even if only approximate solutions of polynomial bit length are sought (see Theorem \ref{theorem:approx-hard}).
    
    \item We demonstrate that the discrete optimal transport problem with independent marginals can be solved in strongly polynomial time by a dynamic programming-type algorithm if both~$\mu$ and~$\nu$ are supported on a fixed bounded subset of a scaled integer lattice with a fixed scaling factor {\color{black} and if~$\nu$ has only two atoms}---even if~$\mu$ represents an arbitrary distribution with independent marginals (see Theorem~\ref{theorem:wass_app_complexity}, Corollary~\ref{corollary:tractable_example0} and the subsequent discussion). The design of this algorithm reveals an intimate connection between optimal transport and the conditional value-at-risk arising in risk measurement.
    
    \item Using a rounding scheme to approximate~$\mu$ and~$\nu$ by distributions~$\tilde\mu$ and~$\tilde\nu$ supported on a scaled integer lattice with a sufficiently small grid spacing constant, we show that {\color{black} if~$\nu$ has only two atoms, then} $\varepsilon$-accurate approximations of the optimal transport distance between~$\mu$ and~$\nu$ can always be computed in pseudo-polynomial time via dynamic programming{\color{black}---even if~$\mu$ represents an arbitrary distribution with independent marginals} (see Theorem~\ref{theorem:approximation}). This result implies that the optimal transport problem with independent marginals is in fact $\#$\textbf{P}-hard in the weak sense~\citep[\S~4]{garey1979computers}.
    
\end{itemize}

Our complexity analysis complements existing hardness results for two-stage stochastic programming problems. Indeed, \cite{dyer2006, dyer2015}, \cite{Grani} and \cite{dhara2021} show that computing optimal first-stage decisions of linear two-stage stochastic programs and evaluating the corresponding expected costs is hard if the uncertain problem parameters follow independent (discrete or continuous) distributions. This paper establishes similar hardness results for discrete optimal transport problems. Our paper also complements the work of \citet{genevay2016stochastic}, who describe a stochastic gradient descent method for computing $\varepsilon$-optimal transportation plans in $\mathcal O(1/\varepsilon^2)$ iterations. Their method can in principle be applied to the discrete optimal transport problems with independent marginals studied here. However, unlike our pseudo-polynomial time dynamic programming-based algorithm, their method is non-deterministic and does not output an approximation of the optimal transport distance~$W_c(\mu, \nu)$.

The remainder of this paper is structured as follows. In Section~\ref{sec:knapsack} we review a useful $\#$\textbf{P}-hardness result for a counting version of the knapsack problem. By reducing this problem to the optimal transport problem with independent marginals, we
prove in Section~\ref{sec:complexity} that the latter problem is also $\#$\textbf{P}-hard even if only approximate solutions are sought. In Section~\ref{sec:polynomial} we develop a dynamic programming-type algorithm that computes approximations of the optimal transport distance in pseudo-polynomial time, and we identify special problem instances that can be solved exactly in strongly polynomial time.

\paragraph{Notation.}
We use boldface letters to denote vectors and matrices.
The vectors of all zeros and ones are denoted by $\bm 0$ and $\bm 1$, respectively, and their dimensions are always clear from the context.
The calligraphic letters $\mathcal I, \mathcal J, \mathcal K$ and $\mathcal L$ are reserved for finite index sets with cardinalities $I, J, K$ and $L$, that is, $\mathcal I = \{ 1, \dots, I \}$ etc. We denote by $\|\cdot\|$ the 2-norm, and for any~$\bm x\in\mathbb R^K$ we use $\delta_{\bm x}$ to denote the Dirac distribution at $\bm x$.

\section{A Counting Version of the Knapsack Problem}
\label{sec:knapsack}
Counting the number of feasible solutions of a $0/1$ knapsack problem is a seemingly simple but surprisingly challenging task. Formally, the problem of interest is stated as follows.
\begin{center}
\fbox{\parbox{0.825\columnwidth}{ {\centering
\vspace{0.5ex}\textsc{$\#$Knapsack}\\[1ex]}
		\textbf{Instance.} A list of items with weights $w_k\in\mathbb Z_+$, $k \in \mathcal K$, and a capacity $b\in\mathbb Z_+$. \\[1ex]
		\textbf{Goal.} Count the number of subsets of the items whose total weight is at most $b$. \vspace{0.5ex}
	}}
\end{center}
The \textsc{$\#$Knapsack} problem is known to be $\#$\textbf{P}-complete~\citep{dyer1993mildly}, and thus it admits no polynomial-time algorithm unless~$\textbf{FP}=\#\textbf{P}$. \cite{dyer1993mildly} discovered a randomized sub-exponential time algorithm that provides almost correct solutions with high probability by sampling feasible solutions using a random walk. By relying on a rapidly mixing Markov chain, \cite{morris2004random} then developed the first fully polynomial randomized approximation scheme. Later, \cite{dyer2003approximate} interweaved dynamic programming and rejection sampling approaches to obtain a considerably simpler fully polynomial randomized approximation scheme. However, randomization remains essential in this approach. Deterministic dynamic programming-based algorithms were developed more recently by \cite{gopalan2011fptas}, and  \cite{vstefankovivc2012deterministic}. In the next section we will demonstrate that a certain class of discrete optimal transport problems with independent marginals is at least as hard as the \textsc{$\#$Knapsack} problem.

\section{Optimal Transport with Independent Marginals}
\label{sec:complexity}
Consider now a variant of the optimal transport problem~\eqref{eq:primal}, where the discrete multivariate distribution $\mu = \otimes_{k \in \mathcal K} \mu_k$ is a product of~$K$ independent univariate marginal distributions~$\mu_k = \sum_{l \in \mathcal L} \mu_k^l \delta_{x_k^l}$ with support points~$x_k^l \in \mathbb R$ and corresponding probabilities~$\mu_k^l$ for every~$l \in \mathcal L$. This implies that~$\mu$ accommodates a total of~$I = L^K$ support points. The assumption that each~$\mu_k$, $k\in\mathcal K$, accommodates the same number~$L$ of support points simplifies notation but can be imposed without loss of generality. Indeed, the probability of any unneeded support point can be set to zero. The other discrete multivariate distribution~$\nu= \sum_{j \in \mathcal J} \nu_j \delta_{\bm y_j}$ has no special structure.
Assume for the moment that all components of the support points as well as all probabilities of~$\mu_k$, $k\in\mathcal K$, and~$\nu$ are rational numbers and thus representable as ratios of two integers, and denote by~$U$ the maximum absolute numerical value among all these integers, which can be encoded using~$\mathcal O(\log_2 U)$ bits. Thus, the total number of bits needed to represent the discrete distributions~$\mu$ and~$\nu$ is bounded above by $\mathcal O(\max \{ K L, J \} \log_2 U)$. 
Note that this encoding does {\em not} require an explicit enumeration of the locations and probabilities of the~$I = L^K$ atoms of the distribution~$\mu$. It is well known that the linear program~\eqref{eq:primal} can be solved in polynomial time by the ellipsoid method, for instance, if~$\mu$ is encoded by such an inefficient exhaustive enumeration, which requires up to~$\mathcal O (\max\{I,J\}\log_2U)$ input bits. Thus, the runtime of the ellipsoid method scales at most polynomially with~$I$, $J$ and~$\log_2U$. As~$I = L^K$ grows exponentially with~$K$, however, this does {\em not} imply tractability of the optimal transport problem at hand, which admits an efficient encoding that scales only linearly with~$K$.
In the remainder of this paper we will prove that the optimal transport problem with independent maringals is $\#$\textbf{P}-hard, and we will identify special problem instances that can be solved efficiently. 

In order to prove $\#$\textbf{P}-hardness, we focus on the following subclass of optimal transport problems with independent marginals, where~$\mu$ is the uniform distribution on~$\{0,1\}^K$, and~$\nu$ has only two support points.
\begin{center}
\fbox{\parbox{0.95\columnwidth}{ {\vspace{0.5ex}\centering \textsc{$\#$Optimal Transport} {\color{black} (for $p \geq 1$ fixed)} \\[1ex]}
	\textbf{Instance.} 
	Two support points $\bm y_{1},\bm y_2\in\mathbb R^K$, $\bm y_1\neq \bm y_2$, and a probability $t\in[0,1]$.
	\\[1ex]
    \textbf{Goal.} For $\mu$ denoting the uniform distribution on $\{0, 1\}^K$ and $\nu = t \delta_{\bm y_1} + (1-t)\delta_{\bm y_2}$, compute an approximation~$\widetilde W_c(\mu, \nu)$ of $W_c(\mu,\nu)$ for $c(\bm x, \bm y) = \|\bm x-\bm y\|^p$ such that the following hold.
    \begin{itemize}
        \item[(i)] The bit length of~$\widetilde W_c(\mu, \nu)$ is polynomially bounded in the bit length of the input~$(\bm y_1,\bm y_2,t)$.
        \item[(ii)] We have~$|\widetilde W_c(\mu, \nu)-W_c(\mu, \nu)|\leq\overline \varepsilon$, where
        \begin{equation*}
	    \overline{\varepsilon} = \frac{1}{4I}\, \min \Big\{ \left|\| \bm x_{i} -\bm y_1 \|^p - \|\bm x_{i}-\bm y_2 \|^p\right| : i \in \mathcal I, ~ \| \bm x_{i} -\bm y_1 \|^p - \left\|\bm x_{i}-\bm y_2 \right\|^p \neq 0 \Big\}
	\end{equation*}
	with $I = 2^K$ and~$\bm x_i$, $i \in \mathcal I$, representing the different binary vectors in $\{0, 1\}^K$.
    \end{itemize} 
    \vspace{0.5ex}
}}
\end{center}
We first need to show that the \textsc{$\#$Optimal Transport} problem is well-posed, that is, we need to ascertain the existence of a sufficiently accurate approximation that can be encoded in a polynomial number of bits. To this end, we first prove that the maximal tolerable approximation error $\overline\varepsilon$ is not too small.
\begin{lemma}
\label{lemma:polynomial_epsilon}
There exists~$\varepsilon \in (0, \overline{\varepsilon}]$ whose bit length is polynomially bounded in the bit length of~$(\bm y_1,\bm y_2,t)$.
\end{lemma}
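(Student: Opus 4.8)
The plan is to bound the maximal tolerable error~$\overline\varepsilon$ from below by an explicit positive rational of polynomial bit length, and then to take~$\varepsilon$ to be a power-of-two lower bound on this quantity. In accordance with the paper's standing rationality assumption, write each of the $2K$ components of~$\bm y_1$ and~$\bm y_2$ as an integer multiple of~$1/D$ for a common denominator~$D\in\mathbb Z_+$; choosing~$D$ as the product of all these component denominators makes~$\log_2 D$ polynomial in the bit length of~$(\bm y_1,\bm y_2)$. Since every $\bm x_i\in\{0,1\}^K$ has all components in~$\{0,1\}$, which are integer multiples of~$1/D$, the squared distances $\|\bm x_i-\bm y_1\|^2=\sum_{k\in\mathcal K}(x_{ik}-y_{1k})^2$ and $\|\bm x_i-\bm y_2\|^2$ are nonnegative integer multiples of~$1/D^2$; write them as $\tilde a_i/D^2$ and $\tilde b_i/D^2$ with $\tilde a_i,\tilde b_i\in\{0,1,\dots,R\}$, where~$R$ is an explicit integer bound of polynomial bit length (for instance $R=4KD^2U^2$, with~$U$ as in the paper). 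Then $\|\bm x_i-\bm y_1\|^p=\tilde a_i^{\,p/2}/D^p$ and $\|\bm x_i-\bm y_2\|^p=\tilde b_i^{\,p/2}/D^p$, whence
\[
  \big|\,\|\bm x_i-\bm y_1\|^p-\|\bm x_i-\bm y_2\|^p\,\big|=\frac{1}{D^p}\,\big|\tilde a_i^{\,p/2}-\tilde b_i^{\,p/2}\big|.
\]

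The crux of the argument is then the following separation bound: for any two distinct nonnegative integers $m,n\le R$ one has $|m^{p/2}-n^{p/2}|\ge\tfrac12 R^{-1/2}$. I would prove this by elementary calculus applied to $f(x)=x^{p/2}$, assuming without loss of generality $m>n\ge0$. If $n=0$, then $m\ge1$ and $|m^{p/2}-n^{p/2}|=m^{p/2}\ge1$. If $n\ge1$, then from $m^{p/2}-n^{p/2}=\int_n^m\tfrac{p}{2}x^{p/2-1}\,\diff x$, the bound $m-n\ge1$, and the monotonicity of $x\mapsto x^{p/2-1}$ (nondecreasing if $p\ge2$, nonincreasing if $1\le p<2$), one gets $|m^{p/2}-n^{p/2}|\ge\tfrac{p}{2}\min\{1,R^{p/2-1}\}\ge\tfrac12 R^{-1/2}$, where the last step uses $p\ge1$ and $R\ge1$. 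This separation bound, together with the minor care it requires when $1\le p<2$ (so that $p/2$ is not an integer and~$f$ is non-smooth at the origin), is the only non-routine ingredient; everything else is bookkeeping.

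Combining the two displays, whenever the $i$-th term in the definition of~$\overline\varepsilon$ is nonzero --- equivalently $\tilde a_i\neq\tilde b_i$, i.e.\ $\bm x_i$ is not equidistant from~$\bm y_1$ and~$\bm y_2$ --- it is at least $\tfrac12 D^{-p}R^{-1/2}$. Moreover, at least one such term is nonzero, since $\{0,1\}^K$ contains~$\bm 0$ together with the $K$ standard unit vectors, which are affinely independent and hence cannot all lie on the perpendicular bisector hyperplane of~$\bm y_1$ and~$\bm y_2$ (a genuine hyperplane because $\bm y_1\neq\bm y_2$); in the degenerate case where all terms vanish, $\overline\varepsilon=+\infty$ and the claim is trivial. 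With $I=2^K$ this yields
\[
  \overline\varepsilon\ \ge\ \frac{1}{4I}\cdot\frac{1}{2\,D^p\sqrt R}\ \ge\ 2^{-(K+3)}\,D^{-p}\,R^{-1}.
\]
I would therefore set $\varepsilon:=2^{-(K+3)}D^{-p}R^{-1}$, which lies in~$(0,\overline\varepsilon]$ and, since~$p$ is fixed, has bit length $\mathcal O(K+\log_2 D+\log_2 R)$ and hence is polynomial in the bit length of~$(\bm y_1,\bm y_2,t)$.
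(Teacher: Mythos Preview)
Your argument is correct and takes a somewhat different route from the paper's. The paper fixes the particular index~$i^\star$ attaining the minimum in the definition of~$\overline\varepsilon$, sets $a=\|\bm x_{i^\star}-\bm y_1\|^2$ and $b=\|\bm x_{i^\star}-\bm y_2\|^2$, and bounds $|a^{p/2}-b^{p/2}|$ from below via the algebraic identity $a^{p/2}-b^{p/2}=(a^p-b^p)/(a^{p/2}+b^{p/2})$ combined with floor/ceiling tricks on~$p$ to land on a rational expression; it spells out only the case $a\ge b\ge 1$ and omits the others. You instead clear denominators up front, reducing to integers $\tilde a_i,\tilde b_i\le R$, and prove a uniform separation lemma $|m^{p/2}-n^{p/2}|\ge\tfrac12 R^{-1/2}$ for all distinct nonnegative integers $m,n\le R$ via the integral representation of $m^{p/2}-n^{p/2}$. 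This buys you an explicit~$\varepsilon$ that does not depend on the (a~priori unknown) minimizer~$i^\star$, handles all magnitude regimes at once, and includes the verification that the minimum in~$\overline\varepsilon$ is taken over a nonempty set---a point the paper leaves implicit.

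One small caveat: your final choice $\varepsilon=2^{-(K+3)}D^{-p}R^{-1}$ is rational only when~$p$ is an integer; for general fixed~$p\ge 1$ the factor~$D^{-p}$ may be irrational and then has no finite bit length. The fix is immediate: replace~$D^{-p}$ by~$D^{-\lceil p\rceil}$, which is rational, still satisfies $D^{-\lceil p\rceil}\le D^{-p}$ since~$D\ge 1$, and has bit length~$\mathcal O(\log_2 D)$ because~$\lceil p\rceil$ is a fixed constant.
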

\begin{proof} 
Note first that encoding an instance of the~\textsc{$\#$Optimal Transport} problem requires at least~$K$ bits because the~$K$ coordinates of~$\bm y_1$ and~$\bm y_2$ need to be enumerated. Note also that, by the definition of~$\overline\varepsilon$, there exists an index~$i^\star\in\mathcal I$ with~$\overline\varepsilon=\frac{1}{4I} | \| \bm x_{i^\star} -\bm y_1 \|^p - \|\bm x_{i^\star}-\bm y_2 \|^p|$. 
{\color{black} As~$p\in [1,\infty)$, $\| \bm x_{i^\star}- \bm y_1\|^p$ and~$\| \bm x_{i^\star}- \bm y_2\|^p$ may be irrational numbers that cannot be encoded with any finite number of bits even if the vectors~$\bm y_1$ and~$\bm y_2$ have only rational entries. Thus, $\overline\varepsilon$ is generically irrational, in which case we need to construct~$\varepsilon\in (0,\overline{\varepsilon})$. To simplify notation, we henceforth use the shorthands~$a = \| \bm x_{i^\star} - \bm y_1 \|^2$ and~$b = \|\bm x_{i^\star} - \bm y_2\|^2$, which can be computed in polynomial time using~$\mathcal O(K)$ additions and multiplications. Without loss of generality, we may assume throughout the rest of the proof that~$a\geq b$. If~$a \geq b\geq 1$, then we have
\begin{align*}
\overline{\varepsilon} \!=\! \frac{1}{2^{K+2}} \left| a^{p/2} - b^{p/2}\right|  = \frac{1}{2^{K+2}} \left| \frac{a^p - b^p}{a^{p/2} + b^{p/2}}\right| 
&\geq  \frac{1}{2^{K+2}}\left|\frac{a^{\lfloor p \rfloor} - b^{\lfloor p \rfloor}}{a^{\lceil p/2 \rceil} + b^{\lceil p/2 \rceil}}\right| \triangleq \varepsilon > 0.
\end{align*}
Here, the first (weak) inequality holds because $a^{p - \lfloor p \rfloor} \geq 1$ and $( b / a )^{p - \lfloor p \rfloor} \leq 1$, which guarantees that 
\[ 
    \left| a^p - b^p \right| = a^{p - \lfloor p \rfloor} \left| a^{\lfloor p \rfloor} - \left( b / a \right)^{p - \lfloor p \rfloor} b^{\lfloor p \rfloor} \right| > \left| a^{\lfloor p \rfloor} - b^{\lfloor p \rfloor} \right|,
\]
whereas the second (strict) inequality follows from the construction of~$\overline\varepsilon$ as a strictly positive number, which implies that~$a \neq b$. The tolerance~$\varepsilon$ constructed in this way can be computed via~$\mathcal O(K)$ additions and multiplications, and as $p$ is not part of the input, its bit length is thus polynomially bounded. 
If~$a\geq 1 \geq b$ or $a,b\leq 1$, then~$\varepsilon$ can be constructed in a similar manner. Details are omitted for brevity.
}
\end{proof}


Lemma~\ref{lemma:polynomial_epsilon} readily implies that for any instance of the \textsc{$\#$Optimal Transport} problem there exists an approximate optimal transport distance~$\widetilde W_c(\mu, \nu)$ that satisfies both conditions~(i) as well as~(ii). For example, we could construct~$\widetilde W_c(\mu, \nu)$ by rounding the exact optimal transport distance~$W_c(\mu, \nu)$ to the nearest multiple of~$\varepsilon$. By construction, this approximation differs from~$W_c(\mu, \nu)$ at most by~$\varepsilon$, which is itself not larger than~$\overline\varepsilon$. In addition, this approximation trivially inherits the polynomial bit length from~$\varepsilon$. We emphasize that, in general, $\widetilde W_c(\mu, \nu)$ cannot be set to the exact optimal transport distance~$W_c(\mu, \nu)$, because~$W_c(\mu, \nu)$ may be irrational and thus have infinite bit length. However, Corollary~\ref{cor:bitlength-of-W} below implies that if~$p$ is even, then~$\widetilde W_c(\mu, \nu)=W_c(\mu, \nu)$ satisfies both conditions~(i) as well as~(ii).

Note that the \textsc{$\#$Optimal Transport} problem is parametrized by~$p$. The following example shows that if~$p$ was treated as an input parameter, then the problem would have exponential time complexity. 

\begin{example}
Consider an instance of the \textsc{$\#$Optimal Transport} problem with~$K=1$, $y_1=1$, $y_2=2$ and~$t=0$. In this case we have~$\mu=\frac{1}{2}\delta_0+\frac{1}{2}\delta_1$, $\nu=\delta_2$ and~$\overline\varepsilon=\frac{1}{8}$. An elementary analytical calculation reveals that~$W_c(\mu,\nu)=\frac{1}{2}(1+2^p)$. The bit length of any $\overline\varepsilon$-approximation~$\widetilde W_c(\mu,\nu)$ of~$W_c(\mu,\nu)$ is therefore bounded below by~$\log_2(\frac{1}{2}(1+2^p)-\frac{1}{8})\geq p-1$, which grows exponentially with the bit length~$\log_2(p)$ of~$p$. Note that the time needed for computing~$\widetilde W_c(\mu,\nu)$ is at least as large as its own bit length irrespective of the algorithm that is used. If~$p$ was an input parameter of the \textsc{$\#$Optimal Transport} problem, the problem's worst-case time complexity would therefore grow at least exponentially with its input size.
\end{example}


The following main theorem shows that the \textsc{$\#$Optimal Transport} problem is hard even if~$p=2$.

{ \begin{theorem}[Hardness of \textsc{$\#$Optimal Transport}]
\label{theorem:approx-hard}
	\textsc{$\#$Optimal Transport} is $\#$\textbf{P}-hard for {\color{black}any $p \geq 1$.}
\end{theorem}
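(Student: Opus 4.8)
The plan is to give a polynomial-time Turing reduction from the $\#$\textbf{P}-complete problem \textsc{$\#$Knapsack} (Section~\ref{sec:knapsack}) to \textsc{$\#$Optimal Transport}. Fix a \textsc{$\#$Knapsack} instance with weights $w_k\in\mathbb Z_+$, $k\in\mathcal K$, and capacity $b\in\mathbb Z_+$, set $\bm w=(w_1,\dots,w_K)$, and let $N$ be the number of subsets $S\subseteq\mathcal K$ with $\sum_{k\in S}w_k\le b$; identifying subsets with indicator vectors, $N=\#\{\bm x\in\{0,1\}^K:\bm w^\top\bm x\le b\}$. If $\bm w=\bm 0$ or $b\ge\sum_k w_k$ then $N=2^K$ trivially, so assume otherwise. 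I would then construct a single family of \textsc{$\#$Optimal Transport} instances, parametrized by the scalar $t$, whose two atoms encode the knapsack constraint, and extract $N$ from polynomially many oracle queries.

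\emph{Encoding the constraint.} Put $\bm y_1=\alpha\bm w$ and $\bm y_2=(\alpha+1)\bm w$ with $\alpha=(2b+1-\|\bm w\|^2)/(2\|\bm w\|^2)\in\mathbb Q$; these vectors have polynomial bit length and $\bm y_1\neq\bm y_2$. Expanding squares gives $\|\bm x-\bm y_1\|^2-\|\bm x-\bm y_2\|^2=2(\bm w^\top\bm x-b-\tfrac12)$ for every $\bm x\in\{0,1\}^K$. Since $s\mapsto s^p$ is strictly increasing on $[0,\infty)$, the number $d_i:=\|\bm x_i-\bm y_1\|^p-\|\bm x_i-\bm y_2\|^p$ has the \emph{same sign} as $\|\bm x_i-\bm y_1\|^2-\|\bm x_i-\bm y_2\|^2=2(\bm w^\top\bm x_i-b-\tfrac12)$, and it vanishes only if $\bm w^\top\bm x_i=b+\tfrac12$, which is impossible for integer weights. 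Hence, \emph{independently of $p$}, all $d_i$ are nonzero, $d_i<0\iff\bm w^\top\bm x_i\le b$, and therefore $\#\{i:d_i<0\}=N$.

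\emph{A closed form for $W_c$.} Because $\mu$ is uniform on $\{0,1\}^K$ and $\nu=t\delta_{\bm y_1}+(1-t)\delta_{\bm y_2}$ has only two atoms, a transport plan is determined by the masses $\pi_{i1}\in[0,2^{-K}]$ shipped from $\bm x_i$ to $\bm y_1$, subject to $\sum_i\pi_{i1}=t$, and its cost equals $\sum_i\pi_{i1}d_i$ plus a constant that does not depend on $\bm\pi$. A greedy (fractional-knapsack / CVaR-type) argument shows the optimum fills the mass $2^{-K}$ into the indices with the smallest $d_i$ first, so for $t=m/2^K$, $m\in\{0,\dots,2^K\}$, one obtains $W_c(\mu,\nu)=\mathrm{const}+2^{-K}\sum_{r=1}^{m}d_{(r)}$, where $d_{(1)}\le\cdots\le d_{(2^K)}$ is the sorted list of the $d_i$. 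Consequently $2^K\big(W_c(\mu,\nu_{m/2^K})-W_c(\mu,\nu_{(m-1)/2^K})\big)=d_{(m)}$, the (possibly irrational and hence uncomputable) constant cancels, and since the sorted list is nondecreasing with exactly its first $N$ entries negative, $d_{(m)}<0\iff m\le N$.

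\emph{Recovering $N$.} I would binary search for $N$ over $m\in\{1,\dots,2^K\}$, which takes $\mathcal O(K)$ iterations. Each iteration calls the \textsc{$\#$Optimal Transport} oracle at $t=m/2^K$ and at $t=(m-1)/2^K$, obtaining approximations with error at most $\overline\varepsilon=\tfrac{1}{4\cdot2^K}\min_i|d_i|$ (the minimum is over a nonempty set since no $d_i$ vanishes), and forms the estimate $\Delta_m=2^K(\widetilde W_1-\widetilde W_2)$ of $d_{(m)}$. Then $|\Delta_m-d_{(m)}|\le 2\cdot2^K\overline\varepsilon=\tfrac12\min_i|d_i|\le\tfrac12|d_{(m)}|$, so $\mathrm{sign}(\Delta_m)=\mathrm{sign}(d_{(m)})$ and the test ``$m\le N$?'' is answered correctly; binary search then returns $N$ exactly. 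All oracle inputs $(\bm y_1,\bm y_2,m/2^K)$ have polynomial bit length, condition~(i) guarantees the outputs do too, and every arithmetic step (building $\bm y_1,\bm y_2$, forming $t$ and $\Delta_m$, comparing to $0$) is polynomial-time; hence the reduction runs in polynomial time and \textsc{$\#$Optimal Transport} is $\#$\textbf{P}-hard. The crux is twofold: the clean observation that the \emph{sign} pattern of the $d_i$---hence the combinatorial content of the problem---does not depend on $p$, and the accuracy bookkeeping showing that the instance-dependent tolerance $\overline\varepsilon$, though it may be large, is always small enough relative to each increment $|d_{(m)}|$ to make the sign tests reliable.
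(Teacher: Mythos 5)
Your reduction is correct and follows essentially the same architecture as the paper's: you binary search over $m$, estimate the slope increment $d_{(m)}=I\bigl(W_c(\mu,\nu_{m/I})-W_c(\mu,\nu_{(m-1)/I})\bigr)$ from two oracle calls, observe that $\overline\varepsilon$ is small enough that the estimated sign is always correct, and recover $N=|\mathcal I(\bm w,b)|$ in $\mathcal O(K)$ iterations. The one genuine difference is how you rule out ties $\bm w^\top\bm x=b$. The paper preprocesses the knapsack instance by replacing $(\bm w,b)$ with $(2\bm w,\,2b+1)$ and then takes $\bm y_1=\bm 0$, $\bm y_2=2b\bm w/\|\bm w\|^2$, so that $\mathcal I_0=\emptyset$. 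You instead leave $(\bm w,b)$ unchanged and take $\bm y_1=\alpha\bm w$, $\bm y_2=(\alpha+1)\bm w$ with $\alpha=(2b+1-\|\bm w\|^2)/(2\|\bm w\|^2)$, so that the squared-distance comparison becomes $\bm w^\top\bm x \lessgtr b+\tfrac12$, which is automatically strict over integers. That is a marginally cleaner encoding of the same nondegeneracy argument, and the rest — the CVaR/fractional-knapsack closed form, the sign-preservation step, and the bit-length accounting via condition~(i) — is identical in substance. One small loose end: your binary search ranges over $m\in\{1,\dots,2^K\}$, so you should explicitly handle the (easy) corner case $N=0$, e.g.\ by checking $\Delta_1$ first or by initializing the search bracket to include $0$ as the paper's Algorithm~\ref{algorithm:binary} does.
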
}
We prove Theorem~\ref{theorem:approx-hard} by reducing the \textsc{$\#$Knapsack} problem to the \textsc{$\#$Optimal Transport} problem via a polynomial-time Turing reduction. To this end, we fix an instance of the \textsc{$\#$Knapsack} problem with input~$\bm w \in \mbb Z^K_{+}$ and $b \in \mbb Z_+$, and we denote by ${\nu}_ t  = t \delta_{\bm y_1} + (1-t) \delta_{\bm y_2}$ the two-point distribution with support points~$\bm y_1 = \bm 0$ and $\bm y_2=2b\bm{w}/ \|\bm w\|^2 $, whose probabilities are parameterized by $t \in [0, 1]$. Recall also that $\mu$ is the uniform distribution on $\{0,1\}^K$, that is, $\mu = \frac{1}{I} \sum_{i \in \mathcal I} \delta_{\bm x_i}$, where $I = 2^K$ and $\{\bm x_i: i \in \mathcal I \} = \{0, 1\}^K$. Without loss of generality, we may assume that the support points of $\mu$ are ordered so as to satisfy
\begin{align*}
    \| \bm x_1 - \bm y_1 \|^p - \| \bm x_1 - \bm y_2 \|^p \leq \| \bm x_2 - \bm y_1 \|^p- \| \bm x_2 - \bm y_2 \|^p \leq \cdots \leq \| \bm x_I - \bm y_1 \|^p - \| \bm x_I -  \bm y_2 \|^p.
\end{align*}
Below we will demonstrate that computing~$W_c(\mu, \nu_t)$ approximately is at least as hard as solving the \textsc{$\#$Knapsack} problem, which amounts to evaluating the cardinality of $\mathcal I(\bm w, b) = \{\bm x \in \{0,1\}^K :  \bm  w^\top \bm x \leq b\}$. 


\begin{lemma}
    \label{lemma:analytic}
    If $c(\bm x, \bm y)=\|\bm x- \bm y \|^p$ for some $p\ge 1$, then the optimal transport distance $W_c(\mu, \nu_t)$ is continuous, piecewise affine and convex in $t\in[0,1]$. Moreover, it admits the closed-form formula
    \begin{align}
    \label{eq:analytic}
    \begin{array}{l}
         \DS W_c(\mu, \nu_t) =  \frac{1}{I} \sum_{i=1}^{\lfloor t I \rfloor} \| \bm x_i -\bm y_1 \|^p + \frac{1}{I} \sum_{i=\lfloor t I \rfloor + 1}^I \left\|\bm x_i-\bm y_2 \right\|^p \\[3ex]
         \DS \hspace{12.9em} + \frac{(tI - \lfloor t I \rfloor)}{I}\left( \| \bm x_{\lfloor t I \rfloor + 1} -\bm y_1 \|^p - \|\bm x_{\lfloor t I \rfloor + 1}-\bm y_2 \|^p \right).
    \end{array}
	\end{align}
\end{lemma}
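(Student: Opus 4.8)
The plan is to exploit the fact that $\nu_t$ has only two atoms, which collapses the transportation polytope to a one-dimensional object and turns the inner linear program into a continuous knapsack problem. First I would observe that every $\bm\pi\in\Pi(\bm\mu,\bm\nu_t)$ is completely determined by its first column: the row-sum constraints $\bm\pi\bm 1=\bm\mu$ force $\pi_{i2}=1/I-\pi_{i1}$ for all $i\in\mathcal I$, after which the column-sum constraints $\bm\pi^\top\bm 1=\bm\nu_t$ reduce to the single equation $\sum_{i\in\mathcal I}\pi_{i1}=t$ (the requirement $\sum_i\pi_{i2}=1-t$ being automatically satisfied because $\sum_i(1/I-\pi_{i1})=1-t$). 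Hence $\Pi(\bm\mu,\bm\nu_t)$ is in bijection with the set $R_t=\{\bm\rho\in\mathbb R^I:0\le\rho_i\le 1/I,\ \sum_{i\in\mathcal I}\rho_i=t\}$, which is nonempty (and compact) for every $t\in[0,1]$. Writing $d_i:=\|\bm x_i-\bm y_1\|^p-\|\bm x_i-\bm y_2\|^p$ and substituting $\pi_{i1}=\rho_i$, $\pi_{i2}=1/I-\rho_i$ into the objective of \eqref{eq:primal} yields
\[
W_c(\mu,\nu_t)=\frac{1}{I}\sum_{i\in\mathcal I}\|\bm x_i-\bm y_2\|^p+\min_{\bm\rho\in R_t}\ \sum_{i\in\mathcal I}\rho_i d_i .
\]

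Next I would solve this inner minimization. Because the support points of $\mu$ have been ordered so that $d_1\le d_2\le\cdots\le d_I$, a standard exchange argument shows that the greedy ``water-filling'' allocation is optimal: put $\rho_i=1/I$ for $i\le\lfloor tI\rfloor$, put $\rho_{\lfloor tI\rfloor+1}=(tI-\lfloor tI\rfloor)/I$, and put $\rho_i=0$ otherwise, with the convention that the middle term is absent (equals zero) whenever $tI\in\mathbb Z$, and in particular when $t\in\{0,1\}$. Indeed, if some feasible $\bm\rho$ had $\rho_i<1/I$ and $\rho_{i'}>0$ for indices $i<i'$, then transferring an amount $\theta:=\min\{1/I-\rho_i,\rho_{i'}\}\ge 0$ from coordinate $i'$ to coordinate $i$ preserves feasibility and changes the objective by $\theta\,(d_i-d_{i'})\le 0$; iterating this move produces the claimed allocation without increasing the objective. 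The resulting optimal value is $\frac{1}{I}\sum_{i=1}^{\lfloor tI\rfloor}d_i+\frac{tI-\lfloor tI\rfloor}{I}\,d_{\lfloor tI\rfloor+1}$. Substituting it back and using the identity $\|\bm x_i-\bm y_2\|^p+d_i=\|\bm x_i-\bm y_1\|^p$ to convert the first $\lfloor tI\rfloor$ summands of $\frac{1}{I}\sum_i\|\bm x_i-\bm y_2\|^p$ gives precisely the closed-form expression \eqref{eq:analytic}.

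Finally, the structural properties are read off from \eqref{eq:analytic}. On each subinterval $[\tfrac{j-1}{I},\tfrac{j}{I}]$ with $j\in\mathcal I$ we have $\lfloor tI\rfloor=j-1$, so every term of \eqref{eq:analytic} is affine in $t$ and the whole function is affine there with slope $d_j=\|\bm x_j-\bm y_1\|^p-\|\bm x_j-\bm y_2\|^p$; thus $W_c(\mu,\nu_t)$ is piecewise affine with respect to the partition $\{0,\tfrac1I,\dots,1\}$ of $[0,1]$. Continuity at a breakpoint $t=j/I$ is verified directly: evaluating the affine piece valid on $[\tfrac{j-1}{I},\tfrac{j}{I}]$ at $t=j/I$ produces $\frac1I\sum_{i=1}^{j}\|\bm x_i-\bm y_1\|^p+\frac1I\sum_{i=j+1}^{I}\|\bm x_i-\bm y_2\|^p$, which is exactly the value assigned by \eqref{eq:analytic} when $\lfloor tI\rfloor=j$. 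Convexity is then immediate, since the consecutive slopes satisfy $d_1\le d_2\le\cdots\le d_I$. (Alternatively, convexity and piecewise-linearity of $t\mapsto W_c(\mu,\nu_t)$ follow abstractly: it is the optimal value of a linear program whose right-hand side depends affinely on $t$ and which is feasible for every $t\in[0,1]$.)

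I expect the only genuinely delicate point to be the careful bookkeeping of the degenerate cases in the inner knapsack problem---namely $t\in\{0,1\}$, integer values of $tI$, and ties $d_i=d_{i+1}$---which affect neither the optimal value nor the validity of \eqref{eq:analytic}, but which must be handled explicitly for the exchange argument and the continuity check at the breakpoints to be fully rigorous.
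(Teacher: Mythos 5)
Your proposal is correct and takes essentially the same approach as the paper: reduce the optimal transport problem to a fractional-knapsack LP over the (effectively one-dimensional) transportation polytope, solve it greedily using the assumed ordering of $\|\bm x_i-\bm y_1\|^p-\|\bm x_i-\bm y_2\|^p$, and obtain the structural properties. The only cosmetic difference is that you parameterize directly by the first column of $\bm\pi$ while the paper routes through conditional distributions $\bm q_1,\bm q_2$ and a variable substitution, and you verify piecewise affinity, continuity and convexity from the explicit formula whereas the paper invokes the parametric-LP theorem of Dantzig; both choices are equivalent and equally rigorous.
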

\begin{proof}
    For any fixed~$t \in [0, 1]$, the discrete optimal transport problem~\eqref{eq:primal} satisfies
	\begin{align*}
	W_c(\mu,  {\nu}_ t)
	&= \min\limits_{\bm \pi \in \Pi(\mu,\nu_t)} ~ \sum_{i \in \mathcal I} \sum_{j \in \mathcal J} \| \bm x_i - \bm y_j \|^p \pi_{ij} \\[0.5ex]
	&= \left\{
	\begin{array}{cl} \min\limits_{\bm q_{1}, \bm q_{2} \in \R_+^I} & \DS t \sum_{i \in \mathcal I} \| \bm x_i - \bm y_1 \|^p q_{1,i} + (1-t) \sum_{i \in \mathcal I} \|\bm x_i - \bm y_2 \|^p q_{2,i} \\ [3ex]
	\textrm{s.t.}& t \bm q_1 +(1-t) \bm q_2 = \bm 1 / I, ~ \bm 1^\top \bm q_1 = 1, ~ \bm 1^\top \bm q_2 = 1.
	\end{array}\right.
	\end{align*}
	The second equality holds because the transportation plan can be expressed as
	\[
	    \pi = \sum_{i \in \mathcal I} \sum_{j \in \mathcal J} \pi_{ij} \delta_{(\bm x_i, \bm y_j)} = t \cdot q_1 \otimes \delta_{\bm y_1} + (1-t) \cdot q_2 \otimes \delta_{\bm y_2},
	\]
	with $q_j = \sum_{i \in \mathcal I} q_{j,i} \delta_{\bm x_i}$ representing the conditional distribution of~$\bm x$ given~$\bm y=\bm y_j$ under~$\pi$ for every~$j = 1, 2$. This is a direct consequence of the law of total probability. By applying the variable transformations $\bm q_1 \leftarrow t I \bm q_1$ and $\bm q_2 \leftarrow (1-t) I \bm q_2$ to eliminate all bilinear terms, we then find
	\begin{align}
    \label{eq:Wc:with:t}
    W_c(\mu, \nu_t)=\left\{\begin{array}{cll}
	\DS \min_{\bm q_1, \bm q_2 \in \R_+^I} & \DS \frac{1}{I} \sum_{i \in \mathcal I} \| \bm x_i -\bm y_1 \|^p q_{1,i} + \frac{1}{I} \sum_{i \in \mathcal I} \left\|\bm x_i-\bm y_2 \right\|^p  q_{2,i} \\[3ex]
	\textrm{s.t.} & \DS \bm 1^\top \bm q_{1} =  t I, ~ \bm 1^\top \bm q_{2} = (1 - t) I, ~ \bm q_1 +  \bm q_2 = \bm 1.
	\end{array}\right.
    \end{align}
    Observe that~\eqref{eq:Wc:with:t} can be viewed as a parametric linear program. By~\cite[Theorem~6.6]{dantzig2003linear}, its optimal value~$W_c(\mu, \nu_t)$ thus constitutes a continuous, piecewise affine and convex function of~$t$. It remains to be shown that~$W_c(\mu, \nu_t)$ admits the analytical expression~\eqref{eq:analytic}. To this end, note that the decision variable $\bm q_2$ and the constraint~$\bm q_1 +  \bm q_2 = \bm 1$ in problem~\eqref{eq:Wc:with:t} can be eliminated by applying the substitution $\bm q_2 \gets \bm 1 - \bm q_1 $. Renaming $\bm q_1$ as $\bm q$ to reduce clutter, problem~\eqref{eq:Wc:with:t} then simplifies to
    \begin{align}
    \label{eq:Wc:without:q2}
    \begin{array}{cl}
    \DS \min_{\bm q \in \mathbb R^I} & \DS \frac{1}{I} \sum_{i \in \mathcal I} \left( \| \bm x_i -\bm y_1 \|^p - \left\|\bm x_i-\bm y_2 \right\|^p \right) q_{i} + \frac{1}{I} \sum_{i \in \mathcal I} \left\|\bm x_i-\bm y_2 \right\|^p \\[3ex]
    \text{s.t.} & \DS \bm 1^\top \bm q = t I, ~ \bm 0 \leq \bm q \leq \bm 1.
    \end{array}
    \end{align}
    Recalling that the atoms of $\mu$ are ordered such that $\| \bm x_1 - \bm y_1 \|^p - \| \bm x_1 - \bm y_2 \|^p \leq \dots \leq \| \bm x_I - \bm y_1 \|^p - \| \bm x_I - \bm y_2 \|^p$, one readily verifies that problem~\eqref{eq:Wc:without:q2} is solved analytically by
    \begin{align*}
	q_{i}^\star = \left\{\begin{array}{ll}
	1 & \text{if } i \leq \lfloor t I \rfloor \\
	t I - \lfloor t I \rfloor  & \text{if } i = \lfloor t I \rfloor + 1 \\
	0 & \text{if } i > \lfloor t I \rfloor + 1.
	\end{array} \right. 
	\end{align*}
	Substituting $\bm q^\star$ into~\eqref{eq:Wc:without:q2} yields~\eqref{eq:analytic}, and thus the claim follows.
\end{proof}

Lemma~\ref{lemma:analytic} immediately implies that the bit length of~$W_c(\mu, \nu_t)$ is polynomially bounded.

\begin{corollary}
\label{cor:bitlength-of-W}
If $c(\bm x, \bm y) = \| \bm x - \bm y \|^p$ and~$p$ is even, then the bit length of the optimal transport distance~$W_c(\mu, \nu_t)$ grows at most polynomially with the bit length of~$(\bm y_1, \bm y_2, t)$.
\end{corollary}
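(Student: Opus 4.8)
The plan is to read off the claim directly from the closed-form formula \eqref{eq:analytic} established in Lemma~\ref{lemma:analytic}. Since $p$ is even, say $p = 2m$, each term $\|\bm x_i - \bm y_j\|^p = \big(\sum_{k\in\mathcal K}(x_{i,k}-y_{j,k})^2\big)^m$ is a polynomial with integer-power exponents in the coordinates of $\bm x_i$ and $\bm y_j$; in particular it is a genuine rational number (not merely an algebraic one) whenever $\bm y_1,\bm y_2$ have rational entries, and its bit length is polynomially bounded. So the first step is to bound the bit length of a single term $\|\bm x_i - \bm y_j\|^p$: writing $y_{j,k} = p_{j,k}/r_{j,k}$ in lowest terms and $D$ for the maximum absolute value of any numerator or denominator, the quantity $(x_{i,k}-y_{j,k})^2$ has numerator and denominator bounded by $\mathcal O(D^2)$, the sum over the $K$ coordinates has numerator and denominator bounded by $D^{\mathcal O(K)}$, and raising to the $m$-th power gives numerator and denominator bounded by $D^{\mathcal O(mK)}$. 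Since $p$ (hence $m$) is a fixed constant and not part of the input, $mK = \mathcal O(K)$, so the bit length of each term is $\mathcal O(K\log_2 D)$, i.e.\ polynomial in the input bit length.

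The second step is to assemble \eqref{eq:analytic} from these building blocks. The formula is a sum of at most $2I = 2^{K+1}$ terms of the form $\tfrac1I\|\bm x_i - \bm y_j\|^p$, plus one extra term $\tfrac{tI-\lfloor tI\rfloor}{I}\big(\|\bm x_{\lfloor tI\rfloor+1}-\bm y_1\|^p - \|\bm x_{\lfloor tI\rfloor+1}-\bm y_2\|^p\big)$. The factor $1/I = 2^{-K}$ contributes only $\mathcal O(K)$ additional bits to the denominator. The parameter $t$ is rational with bit length bounded by the input size, so $tI = 2^K t$ and $\lfloor tI\rfloor$ and the fractional part $tI - \lfloor tI\rfloor$ all have bit length $\mathcal O(K + \text{bitlength}(t))$. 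Adding up $N := 2^{K+1}+1$ rationals, each with numerator and denominator of bit length $B := \mathcal O(K\log_2 D)$, yields a rational whose denominator is at most the product of the individual denominators — bit length $\mathcal O(NB)$ — which is \emph{exponential} in $K$. This is the one place one must be slightly careful: a naive "sum of exponentially many rationals" bound is not polynomial.

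I expect this to be the main (and only real) obstacle, and it is resolved by observing that the denominators are far from coprime. Every term $\|\bm x_i - \bm y_j\|^p$, after expansion, is a polynomial in the finitely many rationals $y_{1,k}, y_{2,k}$ with integer coefficients of bounded size; hence it can be written over the common denominator $\big(\prod_{k\in\mathcal K} r_{1,k}r_{2,k}\big)^{m}$, whose bit length is $\mathcal O(mK\log_2 D) = \mathcal O(K\log_2 D)$. Pulling out the further factor $I = 2^K$ and a bounded denominator coming from $t$, the entire right-hand side of \eqref{eq:analytic} is a single fraction whose common denominator has bit length $\mathcal O(K\log_2 D + \text{bitlength}(t))$, i.e.\ polynomial in the input size. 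The numerator is then a sum of at most $2^{K+1}+1$ integers, each of bit length polynomial in the input size; such a sum has bit length at most the maximum individual bit length plus $\log_2(2^{K+1}+1) = \mathcal O(K)$, hence still polynomial. Therefore $W_c(\mu,\nu_t)$ is a rational number whose numerator and denominator both have polynomially bounded bit length, which is the assertion of the corollary. A one-line remark can record that the same reasoning shows $W_c(\mu,\nu_t)$ is rational (not just of bounded bit length), which is exactly what is needed for the claim after the corollary that $\widetilde W_c(\mu,\nu) = W_c(\mu,\nu)$ is admissible when $p$ is even.
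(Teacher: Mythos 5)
Your proof is correct and follows essentially the same route as the paper: read off the claim from the closed-form formula of Lemma~\ref{lemma:analytic}, observe that all $\mathcal O(2^K)$ summands share a common denominator of polynomially bounded bit length, and note that averaging over $I=2^K$ atoms adds only $\mathcal O(K)$ extra bits. The paper spells out the detailed computation only for $p=2$ and $t=1$ (noting the general case is similar but more tedious), whereas you treat general even $p$ and rational $t$ directly; a minor slip is that your common denominator $\left(\prod_{k}r_{1,k}r_{2,k}\right)^m$ should carry exponent $2m=p$ rather than $m$, but this does not affect the asymptotic bound.
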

\begin{proof}
The bit length of~$(\bm y_1, \bm y_2, t)$ is finite if and only if all of its components are rational and thus representable as ratios of two integers. We denote by~$U\in\mathbb N$ the maximum absolute value of these integers. 

For ease of exposition, we assume first that~$p=2$ and~$t=1$. In addition, we use~$D\in\mathbb N$ to denote the least common multiple of the denominators of the~$K$ components of~$\bm y_1$. It is easy to see that~$D\leq U^{K}$. By Lemma~\ref{lemma:analytic}, the optimal transport distance~$W_c(\mu, \nu_t)$ can thus be expressed as the average of the~$I$ quadratic terms~$\|\bm x_i - \bm y_1 \|^2=\bm x_i^\top \bm x_i+2\bm x_i^\top \bm y_1 +\bm y_1^\top\bm y_1$ for~$i\in\mc I$. Each such term is equivalent to a rational number with denominator~$D^2$ and a numerator that is bounded above by~$K(1+2U+U^2)D^2$. Indeed, each component of~$\bm x_i$ is binary, whereas each component of~$\bm y_1$ can be expressed as a rational number with denominator~$D$ and a numerator with absolute value at most~$UD$. By Lemma~\ref{lemma:analytic}, $W_c(\mu, \nu_t)$ is thus representable as a rational number with denominator~$ID^2$ and a numerator with absolute value at most~$IK(1+U)^2D^2$. Therefore, the number of bits needed to encode~$W_c(\mu, \nu_t)$ is at most of the order
\[
    \mc O\left(\log_2(IKU^2D^2))\right)\leq \mc O\left(\log_2(2^KKU^2U^{2K})\right) = \mc O\left(K\log_2(U)\right),
\]
where the inequality holds because~$I=2^K$ and~$D\leq U^K$. As both~$K$ and~$\log_2(U)$ represent lower bounds on the bit length of~$(\bm y_1, \bm y_2, t)$, we have thus shown that the bit length of~$W_c(\mu, \nu_t)$ is indeed polynomially bounded in the bit length of~$(\bm y_1, \bm y_2, t)$. If~$p$ is any even number and~$t$ any rational probability, then the claim can be proved using similar---yet more tedious---arguments. Details are omitted for brevity.
\end{proof}

Corollary~\ref{cor:bitlength-of-W} implies that the optimal transport distance~$W_c(\mu, \nu_t)$ is rational whenever~$p$ is an even integer and~$t$ is rational. Otherwise, $W_c(\mu, \nu_t)$ is generically irrational because the Euclidean norm of a vector $\bm v=(v_1,\ldots,v_K)$ is irrational unless $(v_1,\ldots, v_K, \|\bm v\|)$ is proportional to a Pythagorean $(K+1)$-tuple, where the inverse proportionality factor is itself equal to the square of an integer. We will now show that the cardinality of the set~$\mathcal I(\bm w, b)$ can be computed by solving the univariate minimization problem
\begin{align}
    \label{eq:min:Wc}
    \min_{t \in [0, 1]} W_c(\mu, \nu_t).
\end{align}
\begin{lemma}
	\label{lemma:knapsack}
	If $c(\bm x, \bm y)=\|\bm x- \bm y \|^p$ for some $p\ge 1$, then $t^\star = |\mathcal I(\bm w, b)|/I$ is an optimal solution of problem~\eqref{eq:min:Wc}. If in addition each component of~$\bm w$ is even and~$b$ is odd, then~$t^\star$ is unique. 
\end{lemma}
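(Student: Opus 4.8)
The plan is to read off the minimizer of the one‑dimensional convex function $t\mapsto W_c(\mu,\nu_t)$ directly from the closed‑form formula in Lemma~\ref{lemma:analytic}, and then to translate the optimality condition into the knapsack constraint $\bm w^\top\bm x\le b$ by an explicit sign computation. First I would record the piecewise structure: by Lemma~\ref{lemma:analytic}, $t\mapsto W_c(\mu,\nu_t)$ is continuous, convex and piecewise affine, with breakpoints contained in $\{k/I:k=0,\dots,I\}$, and differentiating~\eqref{eq:analytic} shows that on each open interval $(k/I,(k+1)/I)$ its slope equals $s_{k+1}:=\|\bm x_{k+1}-\bm y_1\|^p-\|\bm x_{k+1}-\bm y_2\|^p$. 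Since the atoms are ordered so that $s_1\le s_2\le\dots\le s_I$, these slopes are non‑decreasing in $k$ (re‑confirming convexity), so the function strictly decreases on $(0,m_-/I)$, is constant on $(m_-/I,m/I)$, and strictly increases on $(m/I,1)$, where $m_-=|\{i\in\mathcal I:s_i<0\}|$ and $m=|\{i\in\mathcal I:s_i\le 0\}|$. Hence the set of minimizers of~\eqref{eq:min:Wc} is exactly $[m_-/I,\,m/I]$; in particular $t^\star:=m/I$ is always optimal, and it is the unique optimizer precisely when $m_-=m$, i.e. when no slope $s_i$ vanishes.

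Next I would compute the sign of $s_i$. Because $a\mapsto a^{p/2}$ is strictly increasing on $[0,\infty)$ for every $p\ge 1$, the sign of $\|\bm x-\bm y_1\|^p-\|\bm x-\bm y_2\|^p$ equals the sign of $\|\bm x-\bm y_1\|^2-\|\bm x-\bm y_2\|^2$ for each $\bm x\in\{0,1\}^K$, which reduces the general‑$p$ case to the quadratic one. With $\bm y_1=\bm 0$ and $\bm y_2=2b\bm w/\|\bm w\|^2$ (where $b\ge 1$ and $\bm w\ne\bm 0$, as is needed for $\bm y_2$ to be defined and for $\nu_t$ to be a genuine two‑point distribution), the term $\|\bm x\|^2$ cancels and
\begin{align*}
\|\bm x-\bm y_1\|^2-\|\bm x-\bm y_2\|^2 = 2\bm x^\top\bm y_2-\|\bm y_2\|^2 = \frac{4b}{\|\bm w\|^2}\bigl(\bm w^\top\bm x-b\bigr).
\end{align*}
Since $4b/\|\bm w\|^2>0$, this shows $\|\bm x-\bm y_1\|^2\le\|\bm x-\bm y_2\|^2$ iff $\bm w^\top\bm x\le b$, with the analogous equivalences for strict inequality and for equality.

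Combining the two steps, $s_i\le 0\iff\bm w^\top\bm x_i\le b\iff\bm x_i\in\mathcal I(\bm w,b)$, so $m=|\mathcal I(\bm w,b)|$ and therefore $t^\star=|\mathcal I(\bm w,b)|/I$ is an optimal solution of~\eqref{eq:min:Wc}. For uniqueness, $m_-<m$ would require $s_i=0$ for some $i$, i.e. $\bm w^\top\bm x=b$ for some $\bm x\in\{0,1\}^K$; but if every component of $\bm w$ is even then $\bm w^\top\bm x$ is even for all $\bm x\in\{0,1\}^K$ and hence cannot equal the odd integer $b$, so $m_-=m$ and $t^\star$ is the unique minimizer. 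I do not anticipate a real obstacle here; the only points needing care are differentiating~\eqref{eq:analytic} to read off the slopes correctly, carefully distinguishing the strict count $m_-$ from the non‑strict count $m$ when describing the minimizer set, and using the monotonicity of $a\mapsto a^{p/2}$ to dispatch the general exponent $p$.
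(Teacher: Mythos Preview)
Your argument is correct and lands on the same characterization of the minimizer set, $[m_-/I,\,m/I]$, and the same parity argument for uniqueness as the paper. The organization differs slightly: the paper does not work directly with the slopes of the closed form~\eqref{eq:analytic} but instead re-opens the linear program~\eqref{eq:Wc:with:t}, promotes $t$ to a decision variable, observes that $t$ and the two normalization constraints become redundant, and then solves the resulting separable problem to obtain $\argmin_t W_c(\mu,\nu_t)=[|\mathcal I_1|/I,\,|\mathcal I_0\cup\mathcal I_1|/I]$ with $\mathcal I_1=\{i:\|\bm x_i-\bm y_1\|<\|\bm x_i-\bm y_2\|\}$ and $\mathcal I_0=\{i:\|\bm x_i-\bm y_1\|=\|\bm x_i-\bm y_2\|\}$. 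Your route---reading off the slopes $s_{k+1}$ from~\eqref{eq:analytic} and using convexity of the piecewise-affine function directly---is a cleaner use of Lemma~\ref{lemma:analytic} and avoids re-deriving the LP; the paper's route, by contrast, makes the connection to the underlying transport structure (the sets $\mathcal I_0,\mathcal I_1,\mathcal I_2$) more explicit. Both yield identical content, and your explicit sign computation $\|\bm x-\bm y_1\|^2-\|\bm x-\bm y_2\|^2=\tfrac{4b}{\|\bm w\|^2}(\bm w^\top\bm x-b)$ together with the monotonicity of $a\mapsto a^{p/2}$ is exactly what the paper uses implicitly.
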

\begin{proof}
	From the proof of Lemma~\ref{lemma:analytic} we know that the optimal transport distance~$W_c(\mu, \nu_t)$ coincides with the optimal value of~\eqref{eq:Wc:with:t}. Thus, problem~\eqref{eq:min:Wc} can be reformulated as
    \begin{equation}
	\label{eq:min:Wc:with:t}
	\begin{array}{cll}
	\DS \min_{\substack{t \in [0,1] \\ \bm q_1, \bm q_2 \in \R_+^I}} & \DS \frac{1}{I} \sum_{i \in \mathcal I} \| \bm x_i -\bm y_1 \|^p q_{1,i} + \frac{1}{I} \sum_{i \in \mathcal I} \left\|\bm x_i-\bm y_2 \right\|^p  q_{2,i} \\[3ex]
	\textrm{s.t.} & \DS \bm 1^\top \bm q_{1} =  t I, ~ \bm 1^\top \bm q_{2} = (1 - t) I, ~ \bm q_1 +  \bm q_2 = \bm 1.
	\end{array}
	\end{equation}
	Note that the decision variable $t$ as well as the two normalization constraints for~$\bm q_1$ and~$\bm q_2$ are redundant and can thus be removed without affecting the optimal value of~\eqref{eq:min:Wc:with:t}. In other words, there always exists $t\in[0,1]$ such that $\bm 1^\top \bm q_1 =  t I$ and $\bm 1^\top \bm q_2 = (1 - t) I$.
	Hence, \eqref{eq:min:Wc:with:t} simplifies to
	\begin{align}
	\label{eq:min:Wc:without:t}
	\begin{array}{cll}
	\DS \min_{\bm q_1, \bm q_2 \in \R_+^I} & \DS \frac{1}{I} \sum_{i \in \mathcal I} \| \bm x_i -\bm y_1 \|^p q_{1,i} + \frac{1}{I} \sum_{i \in \mathcal I} \left\|\bm x_i-\bm y_2 \right\|^p  q_{2,i} \\[3ex]
	\textrm{s.t.} & \bm q_1 +  \bm q_2 = \bm 1.
	\end{array}
	\end{align} 
	
	Next, introduce the disjoint index sets
	\begin{align*}
	    \mathcal I_0 &= \{i \in \mc I : \|\bm x_i - \bm y_1\| = \|\bm x_i - \bm y_2\|\}\\
	    \mathcal I_{1} &= \{ i \in \mathcal I: \|\bm x_i - \bm y_1 \| < \|\bm x_i-\bm y_{2}\| \} \\
	    \mathcal I_{2} &= \{ i \in \mathcal I: \|\bm x_i - \bm y_1 \| > \|\bm x_i-\bm y_{2}\| \},
	\end{align*}
	which form a partition of $\mathcal I$. Using these sets, optimal solution of problem~\eqref{eq:min:Wc:without:t} can be expressed as
	\begin{align}
	\label{eq:solution}
	q_{1,i}^\star = \left\{\begin{array}{ll}
	\theta_i & \text{if } i \in \mc I_0\\
	1 & \text{if } i \in \mathcal I_1 \\
	0 & \text{if } i \in \mathcal I_2
	\end{array} \right. 
	\quad \text{and} \quad
	q_{2,i}^\star = \left\{\begin{array}{ll}
	1- \theta_i & \text{if } i \in \mc I_0\\
	0 & \text{if } i \in \mathcal I_1 \\
	1 & \text{if } i \in \mathcal I_2
	\end{array} \right. 
	\end{align}
    Therefore, we have
	\begin{align*}
	\min_{t \in [0,1]} W_c(\mu, {\nu}_ t )
	= \frac{1}{I} \sum_{i \in \mathcal I} \min \Big\{\|\bm x_i -\bm y_1 \|^p,\left\|\bm x_i - \bm y_2 \right\|^p \! \Big\}.
	\end{align*}
    Any minimizer~$(\bm q^\star_1,\bm q^\star_2)$ of~\eqref{eq:min:Wc:without:t} gives thus rise to a minimizer~$(t^\star, \bm q^\star_1,\bm q^\star_2)$ of~\eqref{eq:min:Wc:with:t}, where $t^\star = (\bm 1^\top \bm q_1^\star) / I$. 
	Moreover, the minimizers of~\eqref{eq:min:Wc} are exactly all numbers of the form $t^\star = (\bm 1^\top \bm q_1^\star) / I$ corresponding to the minimizer~$(\bm q^\star_1,\bm q^\star_2)$ of~\eqref{eq:min:Wc:without:t}. In view of~\eqref{eq:solution}, this observation allows us to conclude that
	\begin{equation}
	    \label{eq:argmin_wass_interval}
	    \argmin_{t \in [0, 1]} W_c(\mu, \nu_t) =  \left[{|\mathcal I_1|}/{I}, |\mc I_0 \cup \mc I_1|/ I\right].
	\end{equation}
	By the definitions of $\mathcal I(\bm w, b)$, $\bm y_1$ and $\bm y_2$, it is further evident that
	\begin{align*}
	|\mathcal I (\bm w, b)| 
	=& \left| \left\{i \in \mathcal I : \bm w^\top \bm x_i \leq b \right\} \right| 
	= \left| \left\{ i \in \mathcal I: \|\bm x_i - \bm y_1 \|^2 \leq \left\|\bm x_i - \bm y_2 \right\|^2 \right\}\right|
	= |\mc I_1 \cup \mc I_0 |.
	\end{align*}
	Therefore, we may finally conclude that 
	$$ |\mathcal I (\bm w, b)| / I \in \argmin_{t \in [0, 1]} W_c(\mu, \nu_t).$$
	
	Assume now that each component of~$\bm w$ is even and~$b$ is odd. In this case, there exists no~$\bm x\in \{0,1\}^K$ that satisfies~$\bm x^\top \bm w = b$ and consequentially~$\mc I_0$ is empty. Consequently, the interval of minimizers in~\eqref{eq:argmin_wass_interval} collapses to the singleton~$|\mc I_1| / I = |\mc I(\bm w, b)|/I$.
	This observation completes the proof.
\end{proof}
Armed with Lemmas~\ref{lemma:analytic} and \ref{lemma:knapsack}, we are now ready to prove Theorem~\ref{theorem:approx-hard}.

\begin{proof}[Proof of~Theorem~\ref{theorem:approx-hard}]
    Select an instance of the \textsc{$\#$Knapsack} problem with input~$\bm w \in \mbb Z^K_{+}$ and $b \in \mbb Z_+$. Throughout this proof we will assume without loss of generality that each component of~$\bm w$ is even and that~$b$ is odd. Indeed, if this was not the case, we could replace~$\bm w$ with~$\bm w'=2\bm w$ and~$b$ with~$b'=2b+1$. It is easy to verify that the two instances of the \textsc{$\#$Knapsack} problem with inputs~$(\bm w,b)$ and~$(\bm w',b')$ have the same solution. In addition, the bit length of~$(\bm w',b')$ is polynomially bounded in the bit length of~$(\bm w,b)$.
    
    Given~$\bm w$ and~$b$, define the distributions~$\mu$ and ${\nu}_t$ for~$t\in[0,1]$ as well as the set $\mathcal I(\bm w, b)$ in the usual way.
    From Lemma~\ref{lemma:analytic} we know that~$W_c(\mu, \nu_t)$ is continuous, piecewise affine and convex in $t$. The analytical formula~\eqref{eq:analytic} further implies that~$W_c(\mu, \nu_t)$ is affine on the interval~$[(i-1)/I,i/I]$ with slope~{\color{black} $a_i \cdot I$}, where
    \begin{align}
        \label{eq:slopes}
        a_i = W_c (\mu, \nu_{i/I}) - W_c (\mu, \nu_{(i-1)/I}) \qquad \forall i\in\mathcal I.
    \end{align}
    Thus, \eqref{eq:min:Wc} constitutes a univariate convex optimization problem with a continuous piecewise affine objective function.
    As each component of~$\bm w$ is even and~$b$ is odd, Lemma~\ref{lemma:knapsack} implies that $t^\star=|\mathcal I(\bm w, b)| / I$ is the unique minimizer of~\eqref{eq:min:Wc}. Therefore, the given instance of the \textsc{$\#$Knapsack} problem can be solved by solving~\eqref{eq:min:Wc} and multiplying its unique minimizer~$t^\star$ with~$I$.
    
    In the following we will first show that if we had access to an oracle that computes~$W_c(\mu, \nu_t)$ exactly, then we could construct an algorithm that finds~$t^\star$ and the solution~$t^\star I$ of the \textsc{$\#$Knapsack} problem by calling the oracle~$2K$ times (Step~1). Next, we will prove that if we had access to an oracle that solves the \textsc{$\#$Optimal Transport} problem and thus outputs only approximations of~$W_c(\mu, \nu_t)$, then we could extend the algorithm from Step~1 to a polynomial-time Turing reduction from the \textsc{$\#$Knapsack} problem to the \textsc{$\#$Optimal Transport} problem (Step~2). Step~2 implies that \textsc{$\#$Optimal Transport} is $\#$\textbf{P}-hard.
    
    {\em Step~1.} Assume now that we have access to an oracle that computes~$W_c(\mu, \nu_t)$ exactly. In addition, introduce an array $\bm a = (a_0, a_1, \dots, a_I)$ with entries~$a_i$, $i\in\mathcal I$, defined as in~\eqref{eq:slopes} and with~$a_0=-\infty$. 
    Thus, each element of~$\bm a$ can be evaluated with at most two oracle calls. 
    The array~$\bm a$ is useful because it contains all the information that is needed to solve the univariate convex optimization problem~\eqref{eq:min:Wc}. Indeed, as~$W_c(\mu, \nu_t)$ is a convex piecewise linear function with slope~{\color{black} $a_i \cdot I$} on the interval~$[i/I,(i-1)/I]$, the array~$\bm a$ is sorted in ascending order, and the unique minimizer~$t^\star$ of~\eqref{eq:min:Wc} satisfies
    \begin{align}
        \label{eq:binary}
        |\mathcal I(\bm w, b)| = t^\star I = \max \big\{ i \in \mathcal I \cup \{ 0 \} : a_i \leq 0 \big\}.
    \end{align}
    In other words, counting all elements of the set~$\mathcal I(\bm w, b)$ and thereby solving the \textsc{$\#$Knapsack} problem  is equivalent to finding the maximum index $i\in\mathcal I\cup\{0\}$ that meets the condition $a_i\leq 0$. The binary search method detailed in Algorithm~\ref{algorithm:binary} efficiently finds this index. Binary search methods are also referred to as half-interval search or bisection algorithms, and they represent iterative methods for finding the largest number within a sorted array that is smaller or equal to a given threshold (0 in our case). Algorithm~\ref{algorithm:binary} first checks whether the number in the middle of the array is non-positive. Depending on the outcome, either the part of the array to the left or to the right of the middle element may be discarded because the array is sorted. 
    This procedure is repeated until the array collapses to the single element corresponding to the sought number. As the length of the array is halved in each iteration, the binary search method applied to an array of length~$I$ returns the solution in~$\log_2 I=K$ iterations  \cite[\S~12]{cormen2009introduction}.
    
    \begin{table}[H]
	\centering
	\begin{minipage}{0.8\textwidth}
    \vspace{-1em}
	\begin{algorithm}[H]		
	\caption{Binary search method \label{algorithm:binary}}
		\begin{algorithmic}[1]
			\Require An array $\bm a\in \mathbb R^I$ with~$I=2^K$ sorted in ascending order
			\State Initialize $\underline{n} = 0$ and $\overline{n} = I$
			\For{$k=1, \ldots, K$}
			\State \hspace{-1ex}Set $n \gets {(\overline{n} + \underline{n})}/{2}$
			\State \hspace{-1ex}\algorithmicif~~$a_n \leq 0$~~\algorithmicthen~~$\underline{n} \gets n$~~\algorithmicelse~~$\overline{n} \gets n$
			\EndFor	\vspace{0.1em}
			\State \algorithmicif~~$a_{\underline n} \leq 0$~~\algorithmicthen~~$n \gets \underline{n}$~~\algorithmicelse~~$n \gets \overline{n}$
			\Ensure ${n}$
		\end{algorithmic}
	\end{algorithm}
	\vspace{-1em}
	\end{minipage}
    \end{table}
    
    One can use induction to show that, in any iteration~$k$ of Algorithm~\ref{algorithm:binary}, $n$ is given by a multiple of~$2^{K-k}$ and represents indeed an eligible index. Similarly, in any iteration~$k$ we have~$\overline n-\underline n=2^{K-k+1}$.
    
    {\em Step~2.} Assume now that we have only access to an oracle that solves the \textsc{$\#$Optimal Transport} problem, which merely returns an approximation~$\widetilde W_c(\mu, \nu_t)$ of~$W_c (\mu, \nu_t)$. Setting~$\widetilde a_0=-\infty$ and
    \begin{align}
        \label{eq:approximate-slopes}
        \widetilde a_i =\widetilde W_c (\mu, \nu_{i/I}) - \widetilde W_c (\mu, \nu_{(i-1)/I}) \qquad \forall i\in\mathcal I,
    \end{align}
    we can then introduce a perturbed array~$\widetilde {\bm a}=(\widetilde a_0, \widetilde a_1,\ldots, \widetilde a_I)$ which provides an approximation for~$\bm a$. In the following we will prove that, even though~$\widetilde {\bm a}$ is no longer necessarily sorted in ascending order, the sign of~$\widetilde a_i$ coincides with the sign of~$a_i$ for every~$i\in\mathcal I$. Algorithm~\ref{algorithm:binary} therefore outputs the exact solution~$|\mathcal I(\bm w, b)|$ of the \textsc{$\#$Knapsack} problem even if its input~$\bm a$ is replaced with~$\widetilde {\bm a}$. To see this, we first note that
    \begin{align}
        \label{eq:ai-theoretical}
        a_i = \frac{1}{I} \left( \| \bm x_{i} - \bm y_1 \|^p - \| \bm x_{i} - \bm y_2 \|^p \right) \qquad \forall i\in\mathcal I,
    \end{align}
    which is an immediate consequence of the analytical formula~\eqref{eq:analytic} for~$W_c(\mu, \nu_t)$. We emphasize that~\eqref{eq:ai-theoretical} has only theoretical relevance but cannot be used to evaluate~$a_i$ in practice because it relies on our assumption that the support points~$\bm x_i$, $i\in\mathcal I$, are ordered such that~$\| \bm x_i - \bm y_1 \|^p - \| \bm x_i - \bm y_2 \|^p$ is non-decreasing in~$i$. Indeed, there is no efficient algorithm for ordering these~$2^K$ points in practice. Using~\eqref{eq:ai-theoretical}, we then find
    \begin{align*}
        \overline\varepsilon& =\frac{1}{4} \min_{i\in\mathcal I} \left\{ |a_i| : a_i\neq 0\right\}=\frac{1}{4} \min_{i\in\mathcal I} |a_i|,
    \end{align*}
    where the first equality follows from the definition of~$\overline\varepsilon$, and the second equality holds because each component of~$\bm w$ is even and~$b$ is odd, which implies that~$\| \bm x_i - \bm y_1 \| \neq \|\bm x_i - \bm y_2\|$ and thus~$a_i\neq 0$ for all~$i\in\mathcal I$. The last formula for~$\overline\varepsilon$ immediately implies that~$|a_i|\geq 4\overline\varepsilon$ for all~$i\in\mathcal I$. Together with the estimate
    \[
        |\widetilde a_i - a_i| \leq \Big| \widetilde W_c (\mu, \nu_{i/I}) -W_c (\mu, \nu_{i/I})\Big|+\Big| \widetilde W_c (\mu, \nu_{(i-1)/I}) - W_c (\mu, \nu_{(i-1)/I}) \Big|\leq {\color{black} 2\overline\varepsilon},
    \]
    this implies that~$\widetilde a_i$ has indeed the same sign as~$a_i$ for every~$i\in\mathcal I$. As the execution of Algorithm~\ref{algorithm:binary} depends on the input array only through the signs of its components, Algorithm~\ref{algorithm:binary} with input~$\widetilde{\bm a}$ computes indeed the exact solution~$|\mathcal I(\bm w, b)|$ of the \textsc{$\#$Knapsack} problem. If the perturbed slope~$\widetilde a_n$ in line~4 of Algorithm~\ref{algorithm:binary} is evaluated via~\eqref{eq:approximate-slopes} by calling the \textsc{$\#$Optimal Transport} oracle twice, then Algorithm~\ref{algorithm:binary} constitutes a Turing reduction from the $\#$\textbf{P}-hard \textsc{$\#$Knapsack} problem to the \textsc{$\#$Optimal Transport} problem.
    
    To prove that the \textsc{$\#$Optimal Transport} problem is $\#$\textbf{P}-hard, it remains to be shown that if any oracle call requires unit time, then the Turing reduction constructed above runs in polynomial time in the bit length of~$(\bm{w},b)$. This is indeed the case because Algorithm~\ref{algorithm:binary} calls the \textsc{$\#$Optimal Transport} oracle only $2K$ times in total and because all other operations can be carried out efficiently. In particular, the time needed for reading the oracle outputs is polynomially bounded in the size of~$(\bm w, b)$. Indeed, the bit length of~$\widetilde W_c(\mu,\nu_{i/I})$ is polynomially bounded in the bit length of~$(\bm y_1,\bm y_2,i/I)$ thanks to the definition of the \textsc{$\#$Optimal Transport} problem, and the time needed for computing~$(\bm y_1,\bm y_2,i/I)$ is trivially bounded by a polynomial in the bit length of~$(\bm{w},b)$ for any~$i\in\mathcal I$. These observations complete the proof.
\end{proof}

We emphasize that the Turing reduction derived in the proof of Theorem~\ref{theorem:approx-hard} can be implemented without knowing the accuracy level~$\overline\varepsilon$ of the \textsc{$\#$Optimal Transport} oracle. This is essential because~$\overline\varepsilon$ is defined as the minimum of exponentially many terms, and we are not aware of any method to compute it efficiently. Without such a method, a Turing reduction relying on~$\overline\varepsilon$ could not run in polynomial time.

\begin{remark}[Polynomial-Time Turing Reductions]
Recall that a polynomial-time Turing reduction from problem~$A$ to problem~$B$ is a Turing reduction that runs in polynomial time in the input size of~$A$ under the hypothetical assumption that there is an oracle for solving~$B$ in unit time. The time needed for computing oracle inputs and reading oracle outputs is attributed to the Turing reduction and is not absorbed in the oracle. Thus, a Turing reduction can run in polynomial time only if the oracle's output size is guaranteed to be polynomially bounded. The existence of a polynomial-time Turing reduction from~$A$ to~$B$ implies that if there was an efficient algorithm for solving~$B$, then we could solve~$A$ in polynomial time (this operationalizes the assertion that~``$A$ is not harder than~$B$''). One could use this implication as an alternative definition, that is, one could define a polynomial-time Turing reduction as a Turing reduction that runs in polyonomial time provided that the oracle runs in polynomial time. In our opinion, this alternative definition would be perfectly reasonable. However, it is not equivalent to the original definition by~\citet{valiant1979complexitypermanent}, which compels us to ascertain that the oracle output has polynomial size irrespective of the oracle's actual runtime. Instead, the alternative definition directly refers to the oracle's actual runtime. In that it conditions on oracles that run in polynomial time, it immediately guarantees that their outputs have polynomial size. In short, the original definition requires the bit length of the oracle's output to be polynonmially bounded for {\em every} oracle that solves~$B$ (which requires a proof), whereas the alternative definition requires such a bound only for oracles that solve~$B$ in polynomial time (which requires no proof). As Theorem~\ref{theorem:approx-hard} relies on the original definition of a polynomial-time Turing reduction, we had to introduce condition~(ii) in the definition of the \textsc{$\#$Optimal Transport} problem. We consider the differences between the original and alternative definitions of polynomial-time Turing reductions as pure technicalities, but discussing them here seems relevant for motivating our formulation of the \textsc{$\#$Optimal Transport} problem.
\end{remark}

Assume now that~$p$ is an even number, and consider any instance of the \textsc{$\#$Optimal Transport} problem. In this case, all coefficients of the linear program~\eqref{eq:primal} are rational, and thus~$W_c(\mu, \nu_t)$ is a rational number that can be computed in finite time ({\em e.g.}, via the simplex algorithm). From Corollary~\ref{cor:bitlength-of-W} we further know that~$W_c(\mu, \nu_t)$ has  polynomially bounded bit length. Thus, $\widetilde W_c(\mu, \nu_t)=W_c(\mu, \nu_t)$ satisfies both properties~(i) and~(ii) that are required of an admissible approximation of the optimal transport distance. Nevertheless, Theorem~\ref{theorem:approx-hard} asserts that computing~$W_c(\mu, \nu_t)$ approximately is already $\#$\textbf{P}-hard. This trivially implies that computing ~$W_c(\mu, \nu_t)$ {\em exactly} is also $\#$\textbf{P}-hard.

\section{Dynamic Programming-Type Solution Methods}
\label{sec:polynomial}
We now return to the generic optimal transport problem with independent marginals, where $\mu $ is representable as~$\otimes_{k \in \mathcal K} \mu_k$, the marginals of~$\mu$ constitute arbitrary univariate distributions supported on~$L$ points, and~$\nu$ constitutes an arbitrary multivariate distribution supported on~$J$ points. This problem class covers all instances of the \textsc{$\#$Optimal Transport} problem, and by Theorem~\ref{theorem:approx-hard} it is therefore $\#$\textbf{P}-hard even if only approximate solutions are sought. In fact, {\em any} problem class that is rich enough to contain all instances of the \textsc{$\#$Optimal Transport} problem is $\#$\textbf{P}-hard. 
We will now demonstrate that{\color{black}, for $p=2$,} particular instances of the optimal transport problem with independent marginals can be solved in polynomial or pseudo-polynomial time by a dynamic programming-type algorithm even though the distribution~$\mu$ involves exponentially many atoms and the linear program~\eqref{eq:primal} has exponential size. 
Throughout this discussion we call~$\mc N\subseteq \mathbb R$ a one-dimensional regular grid with cardinality~$N$ if there exist~$\hat s_{1}, \ldots, \hat s_{N} \in\mathbb R$ and a grid spacing constant~$d>0$ such that~$\hat s_{i+1}=\hat s_{i}+d$ for all $i = 1, \ldots, N-1$ and~$\mc N = \{\hat s_{1}, \ldots, \hat s_{N}\}$. We say that a set~$\mc M\subseteq \mathbb R$ spans the one-dimensional regular grid~$\mc N$ if~$\mc M\subseteq \mc N$, $\min\mc M=\min\mc N$ and~$\max \mc M=\max \mc N$.

\begin{theorem}[Dynamic Programming-Type Algorithm for Optimal Transport Problems with Independent Marginals]
\label{theorem:wass_app_complexity}
Suppose that $\mu = \otimes_{k \in \mathcal K} \mu_k$ is a product of $K$~independent univariate distributions of the form $\mu_k = \sum_{l \in \mc L} \mu_k^l \delta_{x_k^l}$ and that $\nu_t = t \delta_{\bm y_1} + (1 - t) \delta_{\bm y_2}$ is a two-point distribution. If $c(\bm x, \bm y) = \| \bm x - \bm y \|^2$~and if~$\mc M=\{ x_{k}^l ( y_{1, k} -  y_{2, k}):k \in \mc K,\,l \in \mc L\}$ {\color{black}spans} a regular one-dimensional grid~$\mc N$ with (known) cardinality~$N$, then the optimal transport distance between $\mu$ and $\nu_t$ can be computed exactly by a dynamic programming-type algorithm using~$\mc O(KL\log_2(KL)+ KLN + K^2 N^2)$ arithmetic operations. If all problem parameters are rational and representable as ratios of two integers with absolute values at most~$U$, then the bit lengths of all numbers computed by this algorithm are polynomially bounded in~$K$, $L$, $N$ and~$\log_2(U)$.
\end{theorem}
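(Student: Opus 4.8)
The plan is to exploit the fact that, although $\mu$ has $I=L^K$ atoms, the linear program~\eqref{eq:primal} "sees" $\mu$ only through two one-dimensional summary statistics, each of which can be extracted by a convolution-type dynamic program whose size is kept polynomial precisely by the grid hypothesis. I would first redo the reduction from the proof of Lemma~\ref{lemma:analytic} for an \emph{arbitrary} product distribution $\mu=\sum_{i\in\mathcal I}\mu_i\delta_{\bm x_i}$: writing a transportation plan as $\pi=q_1\otimes\delta_{\bm y_1}+q_2\otimes\delta_{\bm y_2}$ with $\bm q_1+\bm q_2=\bm \mu$ (the vector $\bm q_j=(\pi_{ij})_{i\in\mathcal I}$ being the joint mass carried onto $\bm y_j$) and $\bm 1^\top\bm q_1=t$, and eliminating $\bm q_2=\bm \mu-\bm q_1$, one obtains
\[
W_c(\mu,\nu_t)=\underbrace{\textstyle\sum_{i\in\mathcal I}\|\bm x_i-\bm y_2\|^2\mu_i}_{=:C}+\min\Big\{\textstyle\sum_{i\in\mathcal I}c_i\,q_i:\ \bm 0\le\bm q\le\bm \mu,\ \bm 1^\top\bm q=t\Big\},
\]
with $c_i=\|\bm x_i-\bm y_1\|^2-\|\bm x_i-\bm y_2\|^2$. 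Here $p=2$ is essential: the quadratic terms $\|\bm x_i\|^2$ cancel, so $c_i=-2\,\bm x_i^\top(\bm y_1-\bm y_2)+(\|\bm y_1\|^2-\|\bm y_2\|^2)$ is affine, hence additively separable, in $\bm x_i$. Thus $c_i$ depends on the atom only through the scalar score $S_i:=\bm x_i^\top(\bm y_1-\bm y_2)=\sum_{k\in\mathcal K}x_{i,k}(y_{1,k}-y_{2,k})$, and the inner problem is a fractional knapsack: it is solved by assigning $q_i=\mu_i$ to the atoms with smallest $c_i$ (equivalently, largest $S_i$, the score-to-$c_i$ map being strictly decreasing) until the assigned mass reaches $t$, with a partial fill at the threshold. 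The separable term is $C=\sum_{k\in\mathcal K}\sum_{l\in\mathcal L}\mu_k^l(x_k^l-y_{2,k})^2$, computed in $\mathcal O(KL)$ operations.

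Next I would use the grid hypothesis to bound the number of distinct score values. Since $s_k^l:=x_k^l(y_{1,k}-y_{2,k})\in\mathcal M\subseteq\mathcal N=\{\hat s_1,\dots,\hat s_N\}$ and $\mathcal M$ spans $\mathcal N$ with \emph{known} cardinality $N$, one computes $\hat s_1=\min\mathcal M$ and the spacing $d=(\max\mathcal M-\min\mathcal M)/(N-1)$ in $\mathcal O(KL)$ operations, and recovers the integers $m_k^l=(s_k^l-\hat s_1)/d\in\{0,\dots,N-1\}$. An atom $\bm x_i\leftrightarrow(l_1,\dots,l_K)$ then has $S_i=K\hat s_1+d\,M_i$ with $M_i=\sum_k m_k^{l_k}$ taking one of the at most $K(N-1)+1$ values in $\{0,\dots,K(N-1)\}$. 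Crucially, the $\mu$-mass at score level $M$,
\[
P(M)=\sum_{(l_1,\dots,l_K):\,\sum_k m_k^{l_k}=M}\ \prod_{k\in\mathcal K}\mu_k^{l_k},
\]
is exactly the $M$-th coefficient of the convolution $\rho_1*\cdots*\rho_K$ of the marginal parameter distributions $\rho_k(m)=\sum_{l:\,m_k^l=m}\mu_k^l$. Folding in one marginal at a time, the running convolution is supported on at most $jN$ grid points after $j$ steps, so the whole vector $(P(0),\dots,P(K(N-1)))$ is obtained with $\mathcal O(K^2N^2)$ arithmetic operations; the $\mathcal O(KLN)$ term covers constructing the $\rho_k$ from the $L$ atoms of each $\mu_k$, and the $\mathcal O(KL\log_2(KL))$ term an initial sort organizing (and deduplicating) the $KL$ products $s_k^l$.

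Because $c_i$ depends on $i$ only through $M_i$, all atoms on a given score level are interchangeable in both the objective and the constraints, so the greedy fractional-knapsack solution can be taken constant on each level: it suffices to process the levels $M=K(N-1),K(N-1)-1,\dots$ in decreasing order, accumulating $P(M)$ until the sum reaches $t$, yielding a threshold $M^\star$ with $q^\star(M)=P(M)$ for $M>M^\star$, $q^\star(M^\star)=t-\sum_{M>M^\star}P(M)$, and $q^\star(M)=0$ otherwise; this costs $\mathcal O(KN)$ operations. Writing $c(M)=-2dM+(\|\bm y_1\|^2-\|\bm y_2\|^2-2K\hat s_1)$, the answer is
\[
W_c(\mu,\nu_t)=C+\sum_{M=0}^{K(N-1)}c(M)\,q^\star(M).
\]
For the bit-length claim: every $s_k^l$, and hence $\hat s_1$, $d$, the constant in $c(M)$ and each $c(M)$, is rational with numerator and denominator polynomially bounded in $U$, $K$ and $N$; each $P(M)$ is a probability, so its numerator is at most its denominator, which divides $\prod_{k\in\mathcal K}\prod_{l\in\mathcal L}\mathrm{denom}(\mu_k^l)\le U^{KL}$, giving $\mathcal O(KL\log_2 U)$ bits; likewise $C$ and $t-\sum_{M>M^\star}P(M)$ have polynomially bounded bit length. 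Hence $W_c(\mu,\nu_t)$ and every intermediate quantity the algorithm computes have bit length polynomial in $K$, $L$, $N$ and $\log_2 U$.

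I expect the only genuinely non-routine step to be the structural reduction above, which exhibits that the exponentially large LP depends on $\mu$ solely through the univariate law of $\bm x^\top(\bm y_1-\bm y_2)$ and through $\mathbb{E}_\mu\|\bm x-\bm y_2\|^2$; the grid hypothesis is then exactly what is needed to keep the support of that univariate law---and hence the convolution dynamic program---of polynomial size, since without it the sums $\sum_k s_k^{l_k}$ could take up to $L^K$ distinct values. The remaining work (validity of the greedy at the level of score groups, the operation counts for the incremental convolution, and the bit-length bookkeeping, whose one mildly delicate point is the ``probability $\Rightarrow$ numerator $\le$ denominator'' observation controlling $P(M)$) is straightforward.
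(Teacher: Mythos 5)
Your proposal is correct and takes essentially the same route as the paper: decompose the quadratic cost to isolate the term linear in $\bm x_i^\top(\bm y_1-\bm y_2)$, observe that the exponentially large LP collapses to a univariate problem on the law of the score $\ell(\bm x)=\bm x^\top(\bm y_1-\bm y_2)$, compute that law by a $K$-step convolution supported on the $K(N-1)+1$-point grid $\sum_{k=1}^K\mathcal N$ (this is exactly the paper's recursion~\eqref{eq:recursion}), and finish with the greedy/threshold fill. The only cosmetic difference is that the paper phrases the inner optimization as $t\cdot\mathrm{CVaR}_t[\ell(\bm x)]$ and invokes Rockafellar--Uryasev, whereas you solve the same fractional-knapsack LP directly by a greedy argument; these are identical, and your bookkeeping for the operation count $\mathcal O(KL\log_2(KL)+KLN+K^2N^2)$ and for the bit lengths (via the common denominator $\prod_{k,l}\mathrm{denom}(\mu_k^l)\le U^{KL}$) matches the paper's.
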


Assuming that~$\mc M$ {\color{black}spans} some regular one-dimensional grid~$\mc N$, Theorem~\ref{theorem:wass_app_complexity} establishes an upper bound on the number of arithmetic operations needed to solve the optimal transport problem with independent marginals. We will see that the proof of Theorem~\ref{theorem:wass_app_complexity} is constructive in that it develops a concrete dynamic programming-type algorithm that attains the indicated upper bound (see Algorithm~\ref{algorithm:dynamic-programming}). However, this bound depends on the cardinality~$N$ of the grid~$\mc N$, and Theorem~\ref{theorem:wass_app_complexity} does not relate~$N$ to~$K$, $L$ or~$U$. More importantly, it provides no guidelines for constructing~$\mc N$ or even proving its existence. 

\begin{remark}[Existence of~$\mc N$]
\label{rem:N}
If all support points of~$\mu$ and~$\nu$ have rational components, then a regular one-dimensional grid~$\mc N$ satisfying the assumptions of Theorem~\ref{theorem:wass_app_complexity} is guaranteed to exist. In general, however, its cardinality scales exponentially with~$K$ and~$L$, implying that the dynamic programming-type algorithm of Theorem~\ref{theorem:wass_app_complexity} is inefficient. To see this, assume that for all~$k \in \mc K$, $l \in \mc L$ and~$ j \in \{1,2\}$ there exist integers~$a_{k,l}, c_{j,k} \in \mathbb Z $ and~$b_{k,l}, d_{j,k} \in \mathbb N$ such that~$ x_k^l =a_{k,l}/b_{k,l}$ and~$y_{j,k} = c_{j,k}/ d_{j, k}$. Thus, we have
\[
    x_{k}^l ( y_{1, k} -  y_{2, k})=\frac{ a_k^l(c_{1,k} d_{2,k} - c_{2,k} d_{1,k})}{ b_{k,l} d_{1,k} d_{2,k}}\quad k\in\mc K, ~\forall l\in\mc L,
\]
which implies that all elements of~$\mc M$ can be expressed as rational numbers with common denominator $D=\prod_{k \in \mc K, l \in \mc L} b_{k,l} d_{1,k}d_{2,k}$. Clearly, $\mc M$ therefore {\color{black}spans} a regular one-dimensional grid~$\mc N$ with grid spacing constant~$d=D^{-1}$ and cardinality~$ N = D(\max \mc M - \min\mc M) + 1$. If~$U$ denotes as usual an upper bound on the absolute values of the integers~$a_{k,l}$, $b_{k,l}$, $c_{j,k}$ and~$d_{j,k}$ for all~$k\in\mc K$, $l\in\mc L$ and~$j\in\{1,2\}$, then we have~$D\leq U^{3KL}$, and all elements of~$\mc M$ have absolute values of at most~$2U^3$. The cardinality of~$\mc N$ therefore satisfies~$N \leq 4 U^{3(KL+1)} + 1$. This reasoning suggests that, in the worst case, the dynamic programming-type algorithm of Theorem~\ref{theorem:wass_app_complexity} may require up to~$\mc O(K^2U^{3(KL+1)})$ arithmetic operations.
\end{remark}

Remark~\ref{rem:N} guarantees that a regular one-dimensional grid~$\mc N$ satisfying the assumptions of Theorem~\ref{theorem:wass_app_complexity} exists whenever the input bit length of the optimal transport problem with independent marginals is finite. However, Remark~\ref{rem:N} also reveals that the algorithm of Theorem~\ref{theorem:wass_app_complexity} may be highly inefficient in general. Remark~\ref{rem:dyn-prog-tractability} below discusses special conditions under which this algorithm is of practical interest.

\begin{remark}[Efficiency of the Dynamic Programming-Type Algorithm]
\label{rem:dyn-prog-tractability}
The algorithm of Theorem~\ref{theorem:wass_app_complexity} is efficient on problem instances that display the following properties.
\begin{itemize}
    \item[\emph{(i)}] If~$\mc M$ {\color{black}spans} a regular one-dimensional grid whose cardinality~$N$ grows only polynomially with~$K$ and~$L$ but is independent of~$U$, then the number of arithmetic operations required by the algorithm of Theorem~\ref{theorem:wass_app_complexity} grows polynomially with~$K$ and~$L$ but is independent of~$U$, and the bit lengths of all numbers computed by this algorithm are polynomially bounded in~$K$, $L$ and~$\log_2(U)$. Hence, the algorithm runs in {\em strongly polynomial time} on a Turing machine.
    
    \item[\emph{(ii)}] If~$\mc M$ {\color{black}spans} a regular one-dimensional grid whose cardinality~$N$ grows polynomially with~$K$, $L$ and~$\log_2(U)$, then the number of arithmetic operations required by the algorithm of Theorem~\ref{theorem:wass_app_complexity} as well as the bit lengths of all numbers computed by this algorithm are polynomially bounded in~$K$, $L$ and~$\log_2(U)$. Hence, the algorithm runs in {\em weakly polynomial time} on a Turing machine.
    
    \item[\emph{(iii)}] If~$\mc M$ {\color{black}spans} a regular one-dimensional grid whose cardinality grows polynomially with~$K$, $L$ and~$U$ (but exponentially with~$\log_2(U)$), then the number of arithmetic operations required by the algorithm of Theorem~\ref{theorem:wass_app_complexity} grows polyonomially with~$K$, $L$ and~$U$, and the bit lengths of all numbers computed by this algorithm are polynomially bounded in~$K$, $L$ and~$\log_2(U)$. Hence, the algorithm runs in {\em pseudo-polynomial time} on a Turing machine.
\end{itemize}
\end{remark}
Before proving Theorem~\ref{theorem:wass_app_complexity}, we recall the definition of the Conditional Value-at-Risk (CVaR) by \citet{rockafellar2002conditional}. Specifically, if the random vector~$\bm x$ is governed by the probability distribution~$\mu$, then the CVaR at level~$t\in(0,1)$ of any Borel measurable loss function~$\ell(\bm x)$ is defined as
\[
    \text{CVaR}_t [\ell(\bm x)] = \inf_{\beta\in\mathbb R}~ \beta +  \frac{1}{t}\, \mathbb E_{\bm x\sim\mu}\left[\max\{\ell(\bm x)-\beta,0\}\right].
\]
Here, the minimization problem over~$\beta$ is solved by the Value-at-Risk (VaR) at level~$t$ \citep[Theorem~10]{rockafellar2002conditional}, which is defined as the left $(1-t)$-quantile of the loss distribution, that is,
$$\text{VaR}_t [\ell(x)] = \inf \left\{ \tau \in \mathbb R: \mu[\ell(x) \leq \tau] \geq 1-t \right\}.$$ 
The proof of~Theorem~\ref{theorem:wass_app_complexity} also relies on the following lemma.

\begin{lemma}[Minkowski sums of regular one-dimensional grids]
	\label{lemma:cardinality}
	If~$\mc N$ is a one-dimensional regular grid with cardinality~$N$ and grid spacing constant~$d>0$, then the $k$-fold Minkowski sum~$\sum_{i=1}^k\mc N$ of~$\mc N$ is another one-dimensional regular grid with cardinality~$k(N-1)+1$ and the same grid spacing constant~$d$.
\end{lemma}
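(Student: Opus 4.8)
The plan is to reduce the statement to an elementary fact about sumsets of integer intervals. First I would exploit the defining property $\hat s_{i+1} = \hat s_i + d$ to write $\mathcal N = \{\hat s_1 + (i-1)d : i = 1, \ldots, N\} = \hat s_1 + d\,\{0, 1, \ldots, N-1\}$, i.e., $\mathcal N$ is an affine image of the integer interval $\{0, 1, \ldots, N-1\}$. Since Minkowski summation commutes with affine maps of this form, the $k$-fold Minkowski sum satisfies
\[
    \sum_{i=1}^k \mathcal N = k\hat s_1 + d\,\Big(\underbrace{\{0, \ldots, N-1\} + \cdots + \{0, \ldots, N-1\}}_{k \text{ terms}}\Big),
\]
so everything reduces to identifying the $k$-fold sumset of the integer interval $\{0, 1, \ldots, N-1\}$.

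The key step is the claim that this sumset equals the integer interval $\{0, 1, \ldots, k(N-1)\}$. One inclusion is immediate: any sum $m_1 + \cdots + m_k$ with each $m_j \in \{0, \ldots, N-1\}$ is a nonnegative integer bounded above by $k(N-1)$. For the reverse inclusion I would argue directly (the case $N = 1$ is trivial, so assume $N \ge 2$): given an integer $n$ with $0 \le n \le k(N-1)$, write $n = q(N-1) + r$ with $q = \lfloor n/(N-1)\rfloor$ and $0 \le r \le N-2$; then $q \le k$, and if $q = k$ then necessarily $r = 0$, so setting $m_1 = \cdots = m_q = N-1$, $m_{q+1} = r$ (when $q < k$), and all remaining $m_j = 0$ realizes $n$ as an element of the sumset. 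An equally short alternative is induction on $k$, peeling off one coordinate via $m = n - n'$ with $n' = \max\{0,\, n-(N-1)\}$ and invoking the inductive hypothesis on the interval $\{0,\ldots,(k-1)(N-1)\}$.

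Combining the two steps, $\sum_{i=1}^k \mathcal N = k\hat s_1 + d\,\{0, 1, \ldots, k(N-1)\}$, which is by definition a one-dimensional regular grid with grid spacing constant $d$ and cardinality $k(N-1)+1$; as a byproduct its minimum is $k\hat s_1 = k\min\mathcal N$ and its maximum is $k\hat s_1 + k(N-1)d = k\max\mathcal N$. I do not anticipate a genuine obstacle: the only point that requires a line of care is the surjectivity of the map $(m_1,\ldots,m_k)\mapsto\sum_j m_j$ onto $\{0,\ldots,k(N-1)\}$, which is precisely the elementary sumset claim above; everything else is routine bookkeeping with the affine reparametrization.
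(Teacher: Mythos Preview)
Your proposal is correct and takes essentially the same approach as the paper: both reduce via an affine reparametrization to the $k$-fold Minkowski sum of an integer interval (the paper uses $\{1,\ldots,N\}$, you use $\{0,\ldots,N-1\}$) and then read off the cardinality $k(N-1)+1$. If anything, you are slightly more careful than the paper in spelling out why every integer in $\{0,\ldots,k(N-1)\}$ is actually attained, which the paper simply asserts.
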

\begin{proof}
  Any regular one-dimensional grid with cardinality~$N$ and grid spacing constant~$d>0$ is representable as the image of~$\{1,\ldots,N\}$ under the affine transformation~$f(s)=\hat s_1-d+ds$, where $\hat s_{1}$ denotes the smallest element of~$\mc N$. It is immediate to see that the $k$-fold Minkowski sum of~$\mc N$ is another one-dimensional regular grid with grid spacing constant~$d$. In addition, the cardinality of this Minkowski sum satisfies
   \begin{align*}
   \left| \sum_{i=1}^k \mc N \right| = \left|\sum_{i=1}^kf(\{1, \ldots, N\})\right|= \left|f\left(\sum_{i=1}^k\{1, \ldots, N\}\right)\right|
   =|f(\{k, \ldots, kN\})|
   =|\{k, \ldots,   kN\}|= k(N-1)+1, 
   \end{align*}
   where the second equality holds because~$f$ is affine and because the cardinality of any set is invariant under translations. Thus, the claim follows.
    \end{proof}
    
\begin{proof}[Proof of~Theorem~\ref{theorem:wass_app_complexity}]
    Throughout this proof we exceptionally assume that each arithmetic operation can be performed in unit time irrespective of the bit lengths of the involved operands. We emphasize that everywhere else in the paper, however, time is measured in the standard Turing machine model of computation. Throughout this proof we further set~$I=L^K$ and denote as usual by~$\bm x_i$, $i\in\mc I$, the~$I$ different support points of~$\mu$. Then, the optimal transport distance between $\mu$ and $\nu_t$ can be expressed as
    \begin{align}
	W_c(\mu,  {\nu}_ t)
	&= \min\limits_{\bm \pi \in \Pi(\mu,\nu_t)} ~ \sum_{i \in \mathcal I} \sum_{j \in \mathcal J} \left( \| \bm x_i \|^2 + \| \bm y_j \|^2 - 2 \bm x_i^\top \bm y_j \right) \pi_{ij} \notag \\[0.5ex]
	&= \mathbb E_{\bm x \sim \mu} \left[ \| \bm x \|^2 \right] + \mathbb E_{\bm y\sim \nu_t} \left[ \| \bm y \|^2 \right] - 2 \max\limits_{\bm \pi \in \Pi(\mu,\nu_t)} \sum_{i \in \mathcal I} \sum_{j \in \mathcal J} \bm x_i^\top \bm y_j \pi_{ij}. \label{eq:three:terms}
	\end{align}
	The two expectations in~\eqref{eq:three:terms} can be evaluated in $\mathcal O(KL)$ arithmetic operations because 
	\begin{align*}
	    \mathbb E_{\bm x \sim \mu} \left[ \| \bm x \|^2 \right] =\sum_{k \in \mc K} \mathbb E_{ x_k \sim \mu_k} \left[ (x_k)^2 \right] = \sum_{k \in \mathcal K} \sum_{l \in \mc L} \mu_{k}^l (x_k^l)^2~ \text{and} ~~ \mathbb E_{\bm y \sim \nu_t} \left[ \| \bm y \|^2 \right] = t \| \bm y_1 \|^2 + (1 - t) \| \bm y_2 \|^2,
	\end{align*}
	and it is easy to verify that their bit lengths are polynomially bounded in~$K$, $L$ and~$\log_2(U)$.
	Moreover, as in the proof of Lemma~\ref{lemma:knapsack}, the maximization problem in~\eqref{eq:three:terms} simplifies to
	\begin{align}
	\max_{\bm \pi \in \Pi(\mu,\nu_t)} \sum_{i \in \mathcal I} \sum_{j \in \mathcal J} \bm x_i^\top \bm y_j \pi_{ij} 
	&= \left\{
	\begin{array}{cl} \max\limits_{\bm q_{1}, \bm q_{2} \in \R_+^I} & \DS t \sum_{i \in \mathcal I} \bm x_i^\top \bm y_1 q_{1,i} + (1-t) \sum_{i \in \mathcal I} \bm x_i^\top \bm y_2 q_{2,i} \\[3ex]
	\textrm{s.t.}& \bm 1^\top \bm q_1 = 1, ~ \bm 1^\top \bm q_2 = 1 \\[1ex]
	& t q_{1,i} + (1-t) q_{2,i} = \mu [\bm x = \bm x_i] \quad \forall i \in \mathcal I.
	\end{array}\right. \notag\\[2ex]
	&= \sum_{i \in \mathcal I} \bm x_i^\top \bm y_2 \, \mu [\bm x \!=\! \bm x_i] + \left\{
	\begin{array}{cl} \max\limits_{\bm q \in \R_+^I} & \DS \sum_{i \in \mathcal I} \bm x_i^\top (\bm y_1 - \bm y_2) q_{i} \\[3ex]
	\textrm{s.t.}& \bm 1^\top \bm q = t \\[1ex]
	& q_{i} \leq \mu [\bm x \! =\! \bm x_i] \quad \forall i \in \mathcal I, \!
	\end{array}\right. \label{eq:third:term}
	\end{align}
	where the second equality follows from the variable substitution~$\bm q \gets t \bm q_1$ and the subsequent elimination of $\bm q_2$ by using the equations $ (1-t) q_{2,i} = \mu [ \bm x = \bm x_i] - q_{i}$ for all~$i\in\mathcal I$. Observe next that the first sum in~\eqref{eq:third:term} can again be evaluated using $\mathcal O(KL)$ arithmetic operations because
	\begin{align*}
	    \sum_{i \in \mathcal I} \bm x_i^\top \bm y_2 \, \mu [ \bm x = \bm x_i]
	    = \mathbb E_{\bm x \sim \mu} \left[ \bm x^\top \bm y_2 \right]
	    = \sum_{k \in \mc K} \mathbb E_{ x_k \sim \mu_k} \left[x_k  y_{2, k} \right]
	    = \sum_{k \in \mathcal K} \sum_{l \in \mc L}  x_{k}^l \mu_k^l y_{2,k},
	\end{align*}
	and the bit length of this sum is polynomially bounded in~$K$, $L$ and~$\log_2(U)$. For~$t=0$, the optimal value of the maximization problem in~\eqref{eq:third:term} vanishes. For~$t=1$, on the other hand, the problem's optimal solution satisfies~$q_i = \mu[\bm x = \bm x_i]$ for all~$i \in \mc I$. By using now standard arguments, one readily verifies that the corresponding optimal value can once again be computed in~$\mc O(KL)$ arithmetic operations and has polynomially bounded bit length in~$K$, $L$ and~$\log_2(U)$. In the remainder of the proof we may thus assume that~$t \in (0,1)$. To solve the maximization problem in~\eqref{eq:third:term} in this generic case, we first reformulate it as
	{\color{black}
	\begin{align}
	    \label{eq:fourth:term}
	    t\cdot \max \left\{ \sum_{i \in \mathcal I} \ell(\bm x_i) \,\mu[\bm x = \bm x_i]\, q_{i} \;:\; \DS \bm 0 \leq \bm q \leq t \cdot \bm 1, ~ \sum_{i \in \mathcal I} \mu [\bm x = \bm x_i]\, q_i = 1 \right\} 
	\end{align}
	by applying the variable substitution $q_i \gets q_i/ (t\cdot\mu[\bm x = \bm x_i])$ and defining $\ell(\bm x) = \bm x^\top (\bm y_1 - \bm y_2)$.
	The maximization problem in~\eqref{eq:fourth:term} is then readily recognized as the dual representation of the CVaR of~$\ell(\bm x)$ at level~$t$; see, {\em e.g.}, \cite[Example~6.16]{ref:shapiro2021lectures}. The expression~\eqref{eq:fourth:term} thus equals~$t \cdot \mathop{\text{CVaR}}_t(\ell(\bm x))$.}
	
	By assumption, there exists a one-dimensional regular grid~$\mc N$ with cardinality~$N$ such that $ x_{k}^l ( y_{1, k} -  y_{2, k}) \in{\mc N}$ for every $k \in \mc K$ and $l \in \mc L$. This readily implies that~${\color{black}\ell(\bm x_i)=}~\bm x_i^\top (\bm y_1 - \bm y_2) \in \mc N_K= \sum_{k=1}^K \mc N$.
	Assume from now on without loss of generality that~$\mc N_K=\{\hat s_{K,1},\ldots,\hat s_{K,|\mc N_K|}\}$ and that the elements of~$\mc N_K$ are sorted in ascending order, that is, $\hat s_{K,1} < \cdots < \hat s_{K,|\mc N_K|}$. Also, denote by~$n_t$ the unique index satisfying
	\begin{equation}
	    \label{eq:critical-index}
	    \sum\limits_{n=1}^{n_t} \mu[ \ell(\bm x) = \hat{s}_{K,n}] \geq 1- t> \sum\limits_{n=1}^{n_t - 1}  \mu[ \ell(\bm x) = \hat{s}_{K,n}].
	\end{equation}
	By \cite[Proposition~8]{rockafellar2002conditional}, {\color{black} the expression~\eqref{eq:fourth:term} can therefore be reformulated as}
	\begin{equation}
	    \label{eq:cvar}
	    t \cdot \text{CVaR}_t [\ell(\bm x) ] =  \left(\sum\limits_{n=1}^{n_t} \mu[\ell(\bm x) = \hat{s}_{K,n}] -(1- t) \right) \hat{s}_{K,n_{t}} + \sum\limits_{n = n_t + 1}^{|\mc N_K|} \mu[\ell(\bm x) = \hat{s}_{K,n}] \hat{s}_{K,n}.
	\end{equation}
	{\color{black} Computing~\eqref{eq:cvar} thus amounts to evaluating a sum of~$\mathcal O(|\mc N_K|)$ terms.} We will now prove that evaluating this sum requires~{\color{black}$\mathcal O(KL \log_2(KL) + K L N + K^2 N^2)$} arithmetic operations. 
	To this end, we first show that the grid points~$\hat s_{K,n}$, $n=1,\ldots,|\mc N_K|$, can be computed in time~{\color{black}$\mc O(KL \log_2(KL)+KN)$} (Step~1), then we show that the probabilities~$\mu[\ell(\bm x) = \hat s_{K,n}]$, $n=1,\ldots, |\mc N_K|$, can be computed recursively in time~{\color{black}$\mc O(KLN+K^2N^2)$} (Step~2), and finally we use these ingredients to compute the right hand side of~\eqref{eq:cvar} in time~$\mc O(KN)$ (Step~3). 

	{\em Step~1.} By assumption, the regular grid~$\mc N$ has known cardinality~$N$ {\color{black} and is spanned by}~$\mc M=\{ x_{k}^l ( y_{1, k} -  y_{2, k}):k \in \mc K,\,l \in \mc L\}$. To compute all elements of~$\mc N$, we first compute all elements of~$\mc M$ in time~$\mc O(KL)$ and sort them in non-decreasing order in time~$\mc O(KL\log_2(KL))$ using merge sort, for example. As~$\mc M$ spans~$\mc N$, the minimum and the maximum of~$\mc M$ coincide with the minimum~$\hat s_1$ and the maximum~$\hat s_N$ of~$\mc N$, respectively. Given~$\hat s_1$ and~$\hat s_N$, we can then compute the grid spacing constant~$d=(\hat s_N-\hat s_1)/(N-1)$ as well as the elements~$\hat s_n=\hat s_1+d(n-1)$, $n=1,\ldots,N$, of~$\mc N$, which requires~$\mc O(N)$ arithmetic operations. The bit lengths of all numbers computed so far are bounded by a polynomial in~$\log_2(U)$ and~$\log_2(N)$.
	
	It is easy to see that $\mc N_K=\sum_{k=1}^K\mc N$ is also a one-dimensional regular grid that has the same grid spacing constant as~$\mc N$ and whose minimum~$\hat s_{K,1}=K\hat s_1$ can be computed in constant time. The elements of~$\mc N_K$ are then obtained by computing~$\hat s_{K,n}=\hat s_{K,1}+d(n-1)$ for all~$n=1,\ldots,|\mc N_K|$, where~$|\mc N_K|=K(N-1)+1$ thanks to Lemma~\ref{lemma:cardinality}. This computation requires~$\mc O(KN)$ arithmetic operations, and the bit lengths of all involved numbers are still bounded by a polynomial in~$\log_2(U)$ and~$\log_2(N)$. This completes Step~1. 
	

    {\em Step~2.} We now show that the probabilities~$\mu[\ell(\bm x) = \hat{s}_{K,n}]$ for~$n=1,\ldots,|\mc N_K|$ can be calculated recursively in time $\mathcal O(K^2 N^2)$.
	To this end, we introduce the partial sums~$\ell_k(\bm x) = \sum_{m=1}^k x_m (y_{1,m} - y_{2,m})$ for every~$k \in \mathcal K$ and note that $\ell_K(\bm x)=\ell(\bm x)$. For every~$k \in \mc K$, the range of the function~$\ell_k(\bm x)$ is a subset of the one-dimensional regular grid~$\mc N_k=\sum_{k'=1}^k\mc N$.
	The law of total probability then implies that
	\begin{align*}
	    \mu [\ell_k(\bm x) = \hat{s}]
	    &= \sum\limits_{\hat s' \in \mc N}\mu \left[ \ell_{k-1}(\bm x) = \hat{s} -\hat{s}',~ x_k (y_{1,k} - y_{2,k}) = \hat{s}' \right] \quad \forall k \in \mathcal K\backslash \{1\},~\forall \hat s\in\mc N_k,
	\end{align*}
	where $\hat s_{1}, \ldots, \hat s_{N}$ denote as usual the elements of~$\mc N$, 
	{\color{black} 
	and where $\mu[\ell_1(\bm x) = \hat s] = \mu_1[x_1(y_{1, 1} - y_{2, 1}) = \hat s]$ for all $\hat s \in \mc N_1$.}
    As $\ell_k(\bm x)=\ell_{k-1}(\bm x)+ x_k (y_{1,k} - y_{2,k})$, 
    $\ell_{k-1}(\bm x)$ is constant in $x_k,\ldots,x_K$ and the components of~$\bm x$ are mutually independent under the product distribution~$\mu=\otimes_{k\in\mc K}\mu_k$, we thus have
	\begin{align}
		\label{eq:recursion}
	    \mu [\ell_k(\bm x) = \hat{s}] = \sum\limits_{\hat s' \in \mc N} \mu \left[ \ell_{k-1}(\bm x) = \hat{s} -\hat{s}'\right]\times \mu_k\left[ x_k (y_{1,k} - y_{2,k}) = \hat{s}'\right]\quad \forall k \in \mc K \backslash \{1\},~\forall \hat s \in \mc N_k.
	\end{align}
    The marginal probabilities~$\mu_k [x_k (y_{1,k} - y_{2,k}) = \hat{s}']$ for all $k \in \mc K$ and~$\hat s' \in \mc N$ can be pre-computed in time~$\mc O(KLN)$. Given~$\mu[\ell_{k-1}(\bm x) = \hat s]$, $\hat s \in \mc N_{k-1}$, each probability~$\mu[\ell_k(\bm x) = \hat{s}]$, $\hat s\in\mc N_k$, can then be computed in time~$\mc O(N)$ by using~\eqref{eq:recursion}. As $|\mc N_k|=\mc O(kN)$ for every~$k\in\mc K$ thanks to Lemma~\ref{lemma:cardinality}, each iteration~$k\in\mc K$ of the  the dynamic programming-type recursion~\eqref{eq:recursion} requires at most~$\mc O(KN^2)$ arithmetic operations. Finally, as there are~$\mc O(K)$ iterations in total, the sought probabilities~$\mu[\ell_{K}(\bm x) = \hat s]$, $\hat s\in\mc N_K$, can be computed in time~$\mc O(K^2N^2)$. An elementary calculation further shows that the bit lengths of these probabilities are bounded by a polynomial in~$K$, $N$ and~$\log_2(U)$. This completes Step~2.
    
    {\em Step~3.} As all terms appearing in the sum on the right hand side of~\eqref{eq:cvar} have been pre-computed in Steps~1 and~2, the sum itself can now be evaluated in time~$\mc O(KN)$ thanks to Lemma~\ref{lemma:cardinality}. Note that the critical index~$n_t$ defined in~\eqref{eq:critical-index} can also be computed in time~$\mc O(KN)$. The bit lengths of all numbers involved in these calculations are bounded by a polynomial in~$K$, $N$ and~$\log_2(U)$. This completes Step~3.
    
    In summary, the time required for evaluating the CVaR in~\eqref{eq:cvar} totals~$\mc O(KL\log_2(KL)+KLN+K^2N^2)$, which matches the overall time required for all calculations described in Steps~1, 2 and~3. This computation time dominates the time~$\mc O(KL)$ spent on all preprocessing steps, and thus the claim follows.
\end{proof}

The dynamic programming-type procedure developed in the proof of Theorem~\ref{theorem:approx-hard} is summarized in Algorithm~\ref{algorithm:dynamic-programming}. This procedure outputs the optimal transport distance between~$\mu$ and~$\nu_t$ (denoted by~$W_c$). In addition, Algorithm~\ref{algorithm:dynamic-programming} can be used for constructing the optimal transportation plan from~$\mu$ to~$\nu_t$.

\begin{table}[H]
\centering
\begin{minipage}{0.8\textwidth}
\vspace{-1em}
\begin{algorithm}[H]
\caption{Optimal Transport with Independent Marginals \label{algorithm:dynamic-programming}}
	\begin{algorithmic}[1]
		\Require $\{\mu_k^l\}_{k \in \mc K, l \in \mc L}$, ~$\{x_k^l\}_{k\in \mc K, l\in \mc L}$, ~$\bm y_1 , \bm y_2 \in \R^K$,~ $t$, ~$N$
		\State Initialize~$\hat s_1 = \min\limits_{k \in \mc K, l \in \mc L} x_k^l (y_{1,k} - y_{2,k})$ and $\hat s_N = \max\limits_{k \in \mc K, l \in \mc K} x_k^l (y_{1,k} - y_{2,k})$
		\State Set~$d = (\hat s_N - \hat s_1) / (N-1)$ and~$\hat s_n = \hat s_1 + d(n-1)~\forall n=1, \ldots, N$
        \State Compute $\mu_k[x_k(y_{1,k} - y_{2,k}) = \hat s_n]$~$\forall k \in \mc K$ and $n \in \mc N$
        \State {\color{black}Set~$\mu[\ell_1 (\bm x) = \hat s_{n}] = \mu_1[x_1(y_{1,1} - y_{2,1}) = \hat s_n]~\forall n=1,\ldots, N$ }
		\For{$k = 2,\ldots, K$}
		\For{$n= 1,\ldots, k (N-1) + 1$}
		\State $\hat s_{k, n} = k\hat s_1+ d(n-1)$
		\State $\mu[\ell_k(\bm x)  = \hat s_{k, n}] = \sum\limits_{\hat s' \in \mc N} \mu[\ell_{k-1}(\bm x) = \hat s_{k, n} - \hat s']\times \mu_k[  x_k(y_{1,k} -y_{2,k}) = \hat s'] $
		\EndFor
		\EndFor
		\State Find the index~$n_t \in \{1, \ldots, K(N-1) + 1\}$ satisfying~\eqref{eq:critical-index}
		\State Set~$${\textrm{CVaR}} =\frac{1}{t}\left[\left(\sum_{n=1}^{n_t} \mu[\ell_K(\bm x)\! =\! \hat s_{K, n}] \!-\! 1\! +\! t\right) \hat s_{K, n_t}\! -\!2 \sum_{n= n_t + 1}^{K(N-1) + 1}\! \mu[\ell_K(\bm x) \!=\! \hat s_{K, n}] \hat s_{K, n}\right] $$
		\State Set $$W_c \!=	\sum\limits_{k\in\mc K}\sum\limits_{l\in\mc L} \mu_k^l (x_k^l)^2\! +\!t \sum\limits_{k\in \mc K} y_{1,k}^2 \!+\! (1\!-\!t) \sum\limits_{k\in\mc K} y_{2,k}^2 -2 \sum\limits_{k\in \mc K} \sum\limits_{l \in \mc L} x_k^l \mu_k^l y_{2,k}-2 t \cdot{\textrm{CVaR}} $$ 
		\Ensure $W_c$
	\end{algorithmic}
\end{algorithm}
\vspace{-1em}
\end{minipage}
\end{table}

\begin{remark}[Optimal Transportation Plan]
The critical index~$n_t$ computed by Algorithm~\ref{algorithm:dynamic-programming} allows us to construct an optimal transportation plan~$\bm\pi^\star\in\mathbb R^{I\times J}_+$ that solves the linear program~\eqref{eq:primal}, where~$\pi^\star_{i,j}$ denotes the probability mass moved from~$\bm x_i$ to~$\bm y_j$ for every~$i\in\mathcal I$ and~$j \in \mathcal J$. To see this, note that the defining properties of~$n_t$ in~\eqref{eq:critical-index} imply that~$\mathrm{VaR}_t[\ell(\bm x)] = \hat s_{K, n_t}$ and~$\mu[\ell(\bm x) = \hat s_{K, n_t}]>0$. We may thus define~$\bm\pi^\star$ via
\begin{align*}
    \pi^\star_{i,1} = 
    \begin{cases}
    \mu[\bm x = \bm x_i] & \text{if } \ell(\bm x_i) > \hat s_{K, n_t} \\[3ex]
    \displaystyle \frac{t - 1 + \sum_{n=1}^{n_t} \mu[\ell(\bm x) = \hat s_{K, n}]}{\mu[\ell(\bm x) = \hat s_{K, n_t}]} \times \mu[\bm x = \bm x_i] \qquad & \text{if } \ell(\bm x_i) = \hat s_{K, n_t}\\[3ex]
    0 & \text{if } \ell(\bm x_i) < \hat s_{K, n_t} 
    \end{cases}
\end{align*}
and $\pi^\star_{i,2} = \mu[\bm x = \bm x_i] - \pi^\star_{i,1}$ for all~$i \in \mathcal I$. By the first inequality in~\eqref{eq:critical-index}, we have~$\bm \pi^\star\geq \bm 0$. In addition, we trivially find~$\pi^\star_{i,1}+\pi^\star_{i,2}=\mu[\bm x = \bm x_i]$ for all~$i\in\mathcal I$, and we have
\begin{align*}
      \sum_{i\in\mathcal I}\pi^\star_{i,1}&=\sum_{\substack{i\in\mathcal I:\\ \ell(\bm x_i)>\hat s_{K,n_t}}}\mu[\bm x = \bm x_i] + \sum_{\substack{i\in\mathcal I:\\ \ell(\bm x_i)=\hat s_{K,n_t}}}\frac{t - 1 + \sum_{n=1}^{n_t} \mu[\ell(\bm x) = \hat s_{K, n}]}{\mu[\ell(\bm x) = \hat s_{K, n_t}]} \times \mu[\bm x = \bm x_i] \\
      &=\sum_{n=n_t+1}^{|\mathcal N_K|} \mu[\ell(\bm x) = \hat s_{K, n}] + t - 1 + \sum_{n=1}^{n_t} \mu[\ell(\bm x) = \hat s_{K, n}]= t = 1-\sum_{i\in\mathcal I}\pi^\star_{i,2}.
\end{align*}
In summary, this shows that~$\bm \pi^\star$ is feasible in the optimal transport problem~\eqref{eq:primal}. Finally, we have
\begin{align*}
      \sum_{i \in \mathcal I} \sum_{j \in \mathcal J} \pi^\star_{ij} \| \bm x_i - \bm y_j \|^2
      &= \mathbb E_{\bm x \sim \mu} \left[ \| \bm x \|^2 \right] + \mathbb E_{\bm y\sim \nu_t} \left[ \| \bm y \|^2 \right] - 2 \sum_{i \in \mathcal I} \sum_{j \in \mathcal J} \bm x_i^\top \bm y_j \pi_{ij}^\star \\
      &= \mathbb E_{\bm x \sim \mu} \left[ \| \bm x \|^2 \right] + \mathbb E_{\bm y\sim \nu_t} \left[ \| \bm y \|^2 \right] - 2 \; \mathbb E_{\bm x \sim \mu} \left[ \bm x^\top \bm y_2 \right] - 2 \sum_{i \in \mathcal I} \ell(\bm x_i)\, \pi_{i,1}^\star \\
      &= \mathbb E_{\bm x \sim \mu} \left[ \| \bm x \|^2 \right] + \mathbb E_{\bm y\sim \nu_t} \left[ \| \bm y \|^2 \right] - 2 \; \mathbb E_{\bm x \sim \mu} \left[ \bm x^\top \bm y_2 \right] - 2 t \cdot \text{CVaR}_t [\ell(\bm x)],
\end{align*}
where the first two equalities follow from~\eqref{eq:three:terms} and~\eqref{eq:third:term}, respectively, while the third equality exploits the definitions of~$\bm \pi^\star$ and the CVaR. The last expression manifestly matches the output~$W_c$ of Algorithm~\ref{algorithm:dynamic-programming}. Hence, we may conclude that $\bm \pi^\star$ is indeed optimal in~\eqref{eq:primal}. Note that evaluating~$\pi^\star_{ij}$ for a fixed~$i\in\mathcal I$ and~$j\in\mathcal J$ requires at most~$\mathcal O(N K + KL)$ arithmetic operations provided that the critical index $n_t$ and the probabilities $\mu[\ell(\bm x) = \hat s_{K, n}]$, $n \in \mathcal N_K$, are given. These quantities are indeed computed by Algorithm~\ref{algorithm:dynamic-programming}.  
\end{remark}

In the following we will identify special instances of the optimal transport problem with independent marginals that can be solved efficiently. Assume first that both~$\mu$ and~$\nu$ are supported on~$\{0,1\}^K$. This implies that all marginals of~$\mu$ represent independent Bernoulli distributions. Unlike in Theorem~\ref{theorem:approx-hard}, however, these Bernoulli distributions may be non-uniform. The following corollary shows that, in this case, the optimal transport problem with independent marginals can be solved in strongly polynomial time.

\begin{corollary}[Binary Support]
\label{corollary:tractable_example0} Suppose that all assumptions of Theorem~\ref{theorem:wass_app_complexity} hold. If in addition~$L=2$, $x^1_k=0$ and~$x^2_k=1$ for all~$k\in\mc K$, and~$\bm y_1, \bm y_2 \in \{0, 1\}^K$, then the optimal transport distance between~$\mu$ and~$\nu_t$ can be computed in strongly polynomial time.
\end{corollary}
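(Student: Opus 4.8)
The plan is to verify that the present setting is a special case of Theorem~\ref{theorem:wass_app_complexity} in which the underlying grid is trivially small, and then to invoke Remark~\ref{rem:dyn-prog-tractability}(i). First I would compute the set $\mc M=\{x_k^l(y_{1,k}-y_{2,k}):k\in\mc K,\,l\in\mc L\}$ explicitly. Since $L=2$ with $x_k^1=0$ and $x_k^2=1$, the product $x_k^l(y_{1,k}-y_{2,k})$ equals $0$ whenever $l=1$ and equals $y_{1,k}-y_{2,k}$ whenever $l=2$. Because $\bm y_1,\bm y_2\in\{0,1\}^K$, each difference $y_{1,k}-y_{2,k}$ lies in $\{-1,0,1\}$, so $\mc M\subseteq\{-1,0,1\}$ while $0\in\mc M$. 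Hence $\mc M$ is one of the four sets $\{0\}$, $\{-1,0\}$, $\{0,1\}$ or $\{-1,0,1\}$, each of which is itself a regular one-dimensional grid with grid spacing constant $d=1$. Consequently $\mc M$ spans the regular one-dimensional grid $\mc N=\mc M$, whose cardinality $N=|\mc M|\le 3$ can be determined exactly in $\mc O(KL)=\mc O(K)$ arithmetic operations simply by scanning the at most $2K$ products defining $\mc M$. In particular $N$ is known, so all hypotheses of Theorem~\ref{theorem:wass_app_complexity} are met (the remaining hypotheses, namely $c(\bm x,\bm y)=\|\bm x-\bm y\|^2$ and $\nu_t$ being a two-point distribution, are assumed).

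Next I would read off the complexity bound. Plugging $L=2$ and $N\le 3$ into the estimate of Theorem~\ref{theorem:wass_app_complexity}, the dynamic programming-type algorithm (Algorithm~\ref{algorithm:dynamic-programming}) solves the problem using $\mc O(KL\log_2(KL)+KLN+K^2N^2)=\mc O(K^2)$ arithmetic operations, which is polynomial in the dimension $\mc O(K)$ of the input. Moreover, since $N$ is bounded by an absolute constant and is in particular independent of $U$, Theorem~\ref{theorem:wass_app_complexity} guarantees that the bit lengths of all numbers produced by the algorithm are polynomially bounded in $K$, $L$, $N$ and $\log_2(U)$, hence polynomially bounded in $K$ and $\log_2(U)$, both of which are at most the bit length of the input. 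This is exactly the scenario of Remark~\ref{rem:dyn-prog-tractability}(i), and therefore the algorithm runs in strongly polynomial time, which establishes the claim.

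I do not expect a genuine obstacle here; the corollary is a direct instantiation of Theorem~\ref{theorem:wass_app_complexity}. The only point requiring a moment of care is that Theorem~\ref{theorem:wass_app_complexity} presumes the cardinality $N$ of $\mc N$ to be \emph{known}, so the argument should spell out that $N$ is recoverable exactly and cheaply from the input, as above. The degenerate case $\bm y_1=\bm y_2$, in which $\mc M=\{0\}$ and $N=1$, is covered by the same reasoning (a singleton is a regular one-dimensional grid), or can alternatively be disposed of directly, since then $W_c(\mu,\nu_t)=\mathbb E_{\bm x\sim\mu}[\|\bm x-\bm y_1\|^2]$ is computable in $\mc O(KL)$ operations.
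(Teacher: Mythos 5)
Your proposal is correct and follows the same route as the paper: observe that $\mc M\subseteq\{-1,0,1\}$, conclude $N\le 3$, plug into the operation count of Theorem~\ref{theorem:wass_app_complexity} to get $\mc O(K^2)$ arithmetic operations, and invoke Remark~\ref{rem:dyn-prog-tractability}(i). You are in fact slightly more careful than the paper's proof in noting that $0\in\mc M$ always, enumerating the possible grids $\mc M$ can span, and confirming that $N$ is recoverable in $\mc O(K)$ time; these are welcome clarifications but do not change the argument.
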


\begin{proof}
Under the given assumptions, we have~$\mc M=\{x_k^l (y_{1,k} - y_{2,k}):k\in \mc K,\, l\in \mc L\}\subseteq \{-1,0,1\}$. Hence, Theorem~\ref{theorem:wass_app_complexity} applies with~${\mc N}\subseteq \{ -1, 0, 1 \}$ and~$N\leq 3$, and therefore Algorithm~\ref{algorithm:dynamic-programming} computes~$W_c(\mu,\nu_t)$ using~$\mc O(K^2)$ arithmetic operations. As~$N$ is constant in~$K$, $L$ and~$\log_2(U)$, Remark~\ref{rem:dyn-prog-tractability}\,\emph{(i)} implies that~$W_c(\mu,\nu_t)$ can be computed in strongly polynomial time in the Turing machine model.
\end{proof}

By generalizing the proof of Corollary~\ref{corollary:tractable_example0} in the obvious way, one can show that the optimal transport problem with independent marginals remains strongly polynomial-time solvable whenever~$\mu$ and~$\nu_t$ are supported on a (fixed) bounded subset of the scaled integer lattice~$\mathbb Z^K/M$ for some (fixed) scaling factor~$M\in\mathbb N$. If~$\mu$ and~$\nu_t$ are supported on a subset of~$\mathbb Z^K/M$ that may grow with the problem's input size or if the scaling factor~$M$ may grow with the input size, then Algorithm~\ref{algorithm:dynamic-programming} ceases to run in polynomial time. We now show, however, that Algorithm~\ref{algorithm:dynamic-programming} stills run in pseudo-polynomial time in these cases.

\begin{corollary}[Lattice Support]
\label{corollary:tractable_example1} Suppose that all assumptions of Theorem~\ref{theorem:wass_app_complexity} hold. If there exists a positive integer~$M\leq U$, such that~$x^l_k\in\mathbb Z/M$ for all~$k\in\mc K$ and~$l\in\mc L$, while~$\bm y_1,\bm y_2\in\mathbb Z^K/M$, then the optimal transport distance between~$\mu$ and~$\nu_t$ can be computed in pseudo-polynomial time.
\end{corollary}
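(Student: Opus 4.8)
The plan is to invoke Theorem~\ref{theorem:wass_app_complexity} directly, exactly as in the proof of Corollary~\ref{corollary:tractable_example0}, after showing that under the lattice assumption the set~$\mc M=\{x_k^l(y_{1,k}-y_{2,k}):k\in\mc K,\,l\in\mc L\}$ spans a regular one-dimensional grid whose cardinality~$N$ is bounded by a polynomial in~$U$ (though not in~$\log_2 U$). Remark~\ref{rem:dyn-prog-tractability}\,\emph{(iii)} then yields the claim. The conceptual point, relative to Remark~\ref{rem:N}, is that the lattice hypothesis forces a common denominator of size~$\mc O(U^2)$ for the elements of~$\mc M$ rather than the generic~$U^{3KL}$, which collapses the exponential dependence on~$K$ and~$L$.

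First I would pin down a usable scaling integer. By hypothesis there is a positive integer~$M\le U$ with~$x_k^l\in\mathbb Z/M$ and~$\bm y_1,\bm y_2\in\mathbb Z^K/M$; writing each~$x_k^l$ and each~$y_{j,k}$ as a fraction in lowest terms, every denominator divides~$M$, hence the least common multiple~$M_0$ of all these denominators also divides~$M$ and therefore satisfies~$M_0\le M\le U$. The quantity~$M_0$ can be computed incrementally from the~$\mc O(KL)$ denominators by repeated gcd/lcm steps, and since every partial lcm divides~$M_0\le U$ no intermediate number exceeds~$U$, so this preprocessing runs even in weakly polynomial time. Replacing~$M$ by~$M_0$, I may assume~$x_k^l=a_{k,l}/M$ and~$y_{j,k}=c_{j,k}/M$ with integer numerators.

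Next, each element of~$\mc M$ equals~$a_{k,l}(c_{1,k}-c_{2,k})/M^2$, i.e.\ an integer multiple of~$1/M^2$, and moreover~$|x_k^l(y_{1,k}-y_{2,k})|\le 2U^2$ because~$|x_k^l|\le U$ and~$|y_{j,k}|\le U$ (each is a ratio of integers of absolute value at most~$U$). Hence~$\mc M$ spans the regular one-dimensional grid~$\mc N$ with spacing constant~$d=1/M^2$, smallest element~$\min\mc M$, largest element~$\max\mc M$, and computable cardinality~$N=M^2(\max\mc M-\min\mc M)+1\le 4M^2U^2+1\le 4U^4+1$. In particular~$N=\mc O(U^4)$ grows polynomially with~$U$ (and trivially with~$K$ and~$L$) but exponentially with~$\log_2 U$.

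Finally, Theorem~\ref{theorem:wass_app_complexity} applies with this grid, so Algorithm~\ref{algorithm:dynamic-programming} computes~$W_c(\mu,\nu_t)$ exactly using~$\mc O(KL\log_2(KL)+KLN+K^2N^2)=\mc O(KL\log_2(KL)+KLU^4+K^2U^8)$ arithmetic operations, all on numbers of bit length polynomial in~$K$, $L$, $N$ and~$\log_2 U$, hence polynomial in~$K$, $L$ and~$\log_2 U$ since~$\log_2 N=\mc O(\log_2 U)$. By Remark~\ref{rem:dyn-prog-tractability}\,\emph{(iii)} this means Algorithm~\ref{algorithm:dynamic-programming} runs in pseudo-polynomial time on a Turing machine, which is the assertion. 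The only step that is not purely mechanical is the one in the second paragraph: arguing that an admissible~$M$ of size at most~$U$ is not merely assumed to exist but can actually be produced efficiently (as the lcm of denominators), so that the grid~$\mc N$, its cardinality~$N$, and hence all inputs to Algorithm~\ref{algorithm:dynamic-programming} are available within the claimed time budget; everything else is a direct specialization of Theorem~\ref{theorem:wass_app_complexity} mirroring Corollary~\ref{corollary:tractable_example0}.
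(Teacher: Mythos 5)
Your proposal is correct and follows the same route as the paper: show that the lattice hypothesis forces~$\mc M\subseteq\mathbb Z/M^2$, so that~$\mc M$ spans a regular grid of spacing~$1/M^2$ with cardinality~$N$ polynomial in~$U$, then invoke Theorem~\ref{theorem:wass_app_complexity} and Remark~\ref{rem:dyn-prog-tractability}\,\emph{(iii)}. Two small remarks: your bound~$N\leq 4M^2U^2+1\leq 4U^4+1$ is the careful one, and it leads to~$\mc O(K^2U^8)$ in the dominant term, whereas the paper states~$N\leq 4U^2$ and~$\mc O(K^2U^4)$, which looks like an arithmetic slip (the product~$Mx_k^l\,(My_{1,k}-My_{2,k})$ can be as large as~$2U^4$, not~$2U^2$); this does not affect the pseudo-polynomial conclusion. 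Your extra paragraph on producing an admissible~$M$ by taking the lcm of the denominators is a reasonable clarification of how the grid data feeding Algorithm~\ref{algorithm:dynamic-programming} become available, though the paper treats~$N$ (equivalently~$M$) as known input via the hypothesis of Theorem~\ref{theorem:wass_app_complexity}, so this is additional polish rather than a different argument.
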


\begin{proof}
    Under the given assumptions, we have~$\mc M=\{x_k^l (y_{1,k} - y_{2,k}):k\in \mc K,\, l\in \mc L\}\subseteq \mathbb Z/M^2$. Therefore, $\mc M$ spans a one-dimensional regular grid~$\mc N\subseteq \mathbb Z/M^2$ with grid spacing constant~$d=1/M^2$ and cardinality
    \begin{equation}
    \label{eq:N-formula}
    \begin{aligned}
        N&=\left(\max\mc M -\min\mc M\right)/d \\
        & = \max_{k\in\mc K,\,l\in\mc L}\left\{ Mx_k^l (My_{1,k} - My_{2,k})\right\} -\min_{k\in\mc K,\,l\in\mc L} \left\{Mx_k^l (My_{1,k} - My_{2,k}) \right\}.
    \end{aligned}
    \end{equation}
    Recall that~$x_k^l=a_k^l/b_k^l$ for some~$a_k^l\in\mathbb Z$ and~$b_k^l\in\mathbb N$ with~$|a_k^l|,|b_k^l|\leq U$ and that~$M\leq U$. We may thus conclude that~$|Mx^l_k|\leq U^2$ for all~$k\in\mc K$ and~$l\in\mc L$. Similarly, one can show that~$|My_{1,k}|\leq U^2$ and~$|My_{2,k}|\leq U^2$ for all~$k\in\mc K$. By~\eqref{eq:N-formula}, we thus have~$N\leq 4U^2$, which implies via Theorem~\ref{theorem:wass_app_complexity} that Algorithm~\ref{algorithm:dynamic-programming} computes~$W_c(\mu,\nu_t)$ using~$\mc O(KL\log_2(KL)+K^2U^4)$ arithmetic operations. We emphasize that the number of arithmetic operations thus grows polynomially with~$K$, $L$ and~$U$ but exponentially with~$\log_2(U)$. By Remark~\ref{rem:dyn-prog-tractability}\,\emph{(iii)}, $W_c(\mu,\nu_t)$ can therefore be computed in pseudo-polynomial time.
\end{proof}

So far we have discussed methods to solve the optimal transport problem with independent marginals {\em exactly}. In the remainder of this section we will show that {\em approximate} solutions can always be computed in pseudo-polynomial time. The following lemma provides a key ingredient for this argument.

\begin{lemma}[Approximating Optimal Transport Distances]
\label{lem:approximate-OT}
Consider four discrete probability distributions~$\mu= \sum_{i \in \mathcal I} \mu_i \delta_{\bm x_i}$, $\tilde\mu= \sum_{i \in \mathcal I} \mu_i \delta_{\tilde{\bm x}_i}$, $\nu= \sum_{j \in \mathcal J} \nu_j \delta_{\bm y_j}$ and~$\tilde\nu= \sum_{j \in \mathcal J} \nu_j \delta_{\tilde{\bm y}_j}$ supported on a hypercube~$[-U,U]^K$ for some~$U>0$. If $c(\bm x, \bm y) = \| \bm x - \bm y \|^2$ and there exists~$\varepsilon\ge 0$ such that $\|\tilde{\bm x}_i-\bm x_i\|_\infty\leq \varepsilon$ for all~$i\in\mc I$ and~$\|\tilde{\bm y}_j-\bm y_j\|_\infty\leq \varepsilon$ for all~$j\in\mc J$, then we have
\begin{align}
\label{eq:diff_wass_ineq}
    |W_c(\mu, \nu) - W_c(\tilde \mu, \tilde \nu)|  \leq 8KU\varepsilon.
\end{align}
\end{lemma}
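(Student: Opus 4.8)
The plan is to bound the change in optimal transport cost by perturbing the support points of the two marginals one family at a time, using the fact that both problems share the \emph{same} probability vectors $\bm\mu$ and $\bm\nu$, hence the same feasible polytope $\Pi(\bm\mu,\bm\nu)$. Fix any transportation plan $\bm\pi\in\Pi(\bm\mu,\bm\nu)$. The same $\bm\pi$ is feasible for both $W_c(\mu,\nu)$ and $W_c(\tilde\mu,\tilde\nu)$, so it suffices to control $\big|\sum_{i,j}\pi_{ij}\big(\|\bm x_i-\bm y_j\|^2-\|\tilde{\bm x}_i-\tilde{\bm y}_j\|^2\big)\big|$ uniformly in $\bm\pi$; taking the minimizing plan for one problem and evaluating the other then yields the two-sided estimate~\eqref{eq:diff_wass_ineq} by a standard symmetric argument.

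The heart of the matter is therefore a pointwise bound on $\big|\,\|\bm x-\bm y\|^2-\|\tilde{\bm x}-\tilde{\bm y}\|^2\,\big|$ when $\|\tilde{\bm x}-\bm x\|_\infty\le\varepsilon$, $\|\tilde{\bm y}-\bm y\|_\infty\le\varepsilon$, and all four points lie in $[-U,U]^K$. First I would write $\|\bm x-\bm y\|^2-\|\tilde{\bm x}-\tilde{\bm y}\|^2 = \langle (\bm x-\bm y)-(\tilde{\bm x}-\tilde{\bm y}),\,(\bm x-\bm y)+(\tilde{\bm x}-\tilde{\bm y})\rangle$, using the identity $\|\bm u\|^2-\|\bm v\|^2=\langle \bm u-\bm v,\bm u+\bm v\rangle$ with $\bm u=\bm x-\bm y$ and $\bm v=\tilde{\bm x}-\tilde{\bm y}$. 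The first factor equals $(\bm x-\tilde{\bm x})-(\bm y-\tilde{\bm y})$, whose $\infty$-norm is at most $2\varepsilon$; the second factor has each coordinate bounded in absolute value by $4U$ since every point lies in $[-U,U]^K$. Applying the bound $|\langle \bm a,\bm b\rangle|\le K\|\bm a\|_\infty\|\bm b\|_\infty$ gives $\big|\,\|\bm x-\bm y\|^2-\|\tilde{\bm x}-\tilde{\bm y}\|^2\,\big|\le K\cdot 2\varepsilon\cdot 4U=8KU\varepsilon$ for every pair $(i,j)$.

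Having this uniform pointwise bound, I would conclude as follows. Let $\bm\pi^\star$ attain $W_c(\mu,\nu)$. Then
\[
W_c(\tilde\mu,\tilde\nu)\le \sum_{i\in\mathcal I}\sum_{j\in\mathcal J}\|\tilde{\bm x}_i-\tilde{\bm y}_j\|^2\,\pi^\star_{ij}\le \sum_{i\in\mathcal I}\sum_{j\in\mathcal J}\|\bm x_i-\bm y_j\|^2\,\pi^\star_{ij}+8KU\varepsilon\sum_{i,j}\pi^\star_{ij}=W_c(\mu,\nu)+8KU\varepsilon,
\]
where the last step uses $\sum_{i,j}\pi^\star_{ij}=1$. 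Swapping the roles of $(\mu,\nu)$ and $(\tilde\mu,\tilde\nu)$ and applying the same argument to a minimizing plan for $W_c(\tilde\mu,\tilde\nu)$ gives the reverse inequality $W_c(\mu,\nu)\le W_c(\tilde\mu,\tilde\nu)+8KU\varepsilon$, and combining the two yields~\eqref{eq:diff_wass_ineq}. I do not anticipate a genuine obstacle here: the only mild subtlety is making sure the constant is tracked correctly through the two factors ($2\varepsilon$ from the perturbation difference, $4U$ from the diameter of $[-U,U]$ in each coordinate, and $K$ from the dimension), and being careful that the identity $\|\bm u\|^2-\|\bm v\|^2=\langle\bm u-\bm v,\bm u+\bm v\rangle$ is applied with the right grouping so that the perturbations of $\bm x$ and $\bm y$ both enter linearly rather than quadratically.
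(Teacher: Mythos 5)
Your proof is correct and reaches the exact constant $8KU\varepsilon$, but it takes a genuinely different route from the paper. You bound the two transport problems against each other directly: since $\mu$ and $\tilde\mu$ (resp.\ $\nu$ and $\tilde\nu$) carry the same probability weights, the feasible polytope $\Pi(\bm\mu,\bm\nu)$ is the same for both, so a uniform pointwise bound $\bigl|\,\|\bm x_i-\bm y_j\|^2-\|\tilde{\bm x}_i-\tilde{\bm y}_j\|^2\,\bigr|\le 8KU\varepsilon$ on the cost matrix entries transfers immediately via the optimal plan of each problem applied to the other. The key computation, using $\|\bm u\|^2-\|\bm v\|^2=\langle\bm u-\bm v,\,\bm u+\bm v\rangle$ with one factor of size $\le 2\varepsilon$ and the other of size $\le 4U$ coordinatewise, is clean and correct. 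The paper instead factors $|W_c(\mu,\nu)-W_c(\tilde\mu,\tilde\nu)|$ at the level of the optimal values themselves, via $|a^2-b^2|=(a+b)|a-b|$ with $a=W_c(\mu,\nu)^{1/2}$ and $b=W_c(\tilde\mu,\tilde\nu)^{1/2}$, bounds $a+b\le 4\sqrt{K}U$ by the diameter of $[-U,U]^K$, and bounds $|a-b|\le 2\sqrt{K}\varepsilon$ using the triangle inequality for the 2-Wasserstein metric together with the identity coupling $\pi^\mu_{ii'}=\mu_i\delta_{ii'}$, $\pi^\nu_{jj'}=\nu_j\delta_{jj'}$. Your proof is more elementary in that it never invokes the metric property of the Wasserstein distance; it exploits only linear programming and a pointwise estimate, and it also makes the ``same polytope, same weights'' observation explicit rather than hiding it behind the metric triangle inequality. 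The paper's route is slightly shorter to write once the triangle inequality is taken as known, and extends more naturally to $W_p$ for general $p$, whereas your direct cost-matrix comparison is tailored to the quadratic cost. Both are valid and give the stated constant.
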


We emphasize that Lemma~\ref{lem:approximate-OT} holds for arbitrary discrete distributions~$\mu$, $\tilde\mu$, $\nu$ and~$\tilde\nu$ provided that~$\tilde\mu$ and~$\tilde \nu$ are obtained by perturbing only the support points of~$\mu$ and~$\nu$, respectively, but not the corresponding probabilities. In particular, the lemma holds even if~$\mu$ and~$\tilde \mu$ fail to represent product distributions with independent marginals, and even if~$\nu$ and~$\tilde \nu$ fail to represent two-point distributions. Note also that, by slight abuse of notation, $\mu_i$, $i\in\mc I$, represent here the probabilties of the support points of~$\mu$ and should not be confused with the univariate marginal distributions~$\mu_k$, $k\in\mc K$, in the rest of the paper.

\begin{proof}[Proof of Lemma~\ref{lem:approximate-OT}]
The elementary identity~$|a^2 - b^2 | = (a+b)|a-b|$ for any~$a, b \in \R_+$ implies that
\begin{align}
\label{eq:diff_wass_ineq1}
    |W_c(\mu, \nu) - W_c(\tilde \mu, \tilde \nu)|  &= \left( W_c( \mu,  \nu_t)^{\frac{1}{2}}  + W_c(\tilde \mu, \tilde \nu)^{\frac{1}{2}}  \right)\left| W_c(\mu, \nu)^{\frac{1}{2}} - W_c(\tilde \mu,\tilde \nu)^{\frac{1}{2}}  \right|.
\end{align}
By the definition of the optimal transport distance, the first term on the right-hand-side of~\eqref{eq:diff_wass_ineq1} satisfies 
\begin{align*}
    W_c(\mu, \nu)^{\frac{1}{2}} + W_c(\tilde \mu, \tilde \nu)^{\frac{1}{2}} &= \left(\min\limits_{\pi \in \Pi(\mu, \nu)} \sum\limits_{i \in \mc I}\sum\limits_{j \in \mc J} \| \bm x_i - \bm y_j\|^2 \pi_{ij} \right)^{\frac{1}{2}} + \left(\min\limits_{\tilde\pi \in \Pi(\tilde \mu, \tilde \nu)} \sum\limits_{i \in \mc I}\sum\limits_{j \in \mc J} \| \tilde{\bm x}_i - \tilde{\bm y}_j\|^2 \tilde{\pi}_{ij}\right)^{\frac{1}{2}} \\
    &\leq 4 \sqrt{K} U,
\end{align*}
where the inequality holds because~$\pi$ and~$\tilde\pi$ are probability distributions and because
\[
    \|\bm x_i - \bm y_j\|^2\leq K \|\bm x_i - \bm y_j\|_\infty^2\leq 4KU^2\quad\text{and}\quad \|\tilde{\bm x}_i - \tilde{\bm y}_j\|^2\leq \|\tilde{\bm x}_i - \tilde{\bm y}_j\|_\infty^2\leq4KU^2
\]
for all~$i \in \mc I$ and~$j \in \mc J$, taking into account that all support points of the four probability distributions~$\mu$, $\tilde\mu$, $\nu$ and~$\tilde\nu$  fall into the hypercube~$[-U,U]^K$. The second term on the right-hand-side of~\eqref{eq:diff_wass_ineq1} satisfies
\begin{align*}
        \left|W_c(\mu, \nu)^{\frac{1}{2}} - W_c(\tilde\mu, \tilde \nu)^{\frac{1}{2}} \right|&\leq \left|W_c(\mu, \nu)^{\frac{1}{2}} - W_c(\tilde \mu, \nu)^{\frac{1}{2}} \right|+\left|W_c(\tilde \mu, \nu)^{\frac{1}{2}} - W_c(\tilde\mu, \tilde \nu)^{\frac{1}{2}} \right| \\
        & \leq W_c(\mu, \tilde \mu)^{\frac{1}{2}} + W_c(\nu, \tilde \nu)^{\frac{1}{2}}\\
        &= \left(\min\limits_{\pi^\mu \in \Pi(\mu, \tilde \mu)} \sum\limits_{i , i'\in \mc I}\| \bm x_i - \tilde{\bm x}_{i'}\|^2 \pi^\mu_{ii'}\right)^{\frac{1}{2}}+\left(\min\limits_{\pi^\nu \in \Pi(\nu, \tilde \nu)} \sum\limits_{j , j'\in \mc J}\| \bm y_j - \tilde{\bm y}_{j'}\|^2 \pi^\nu_{jj'}\right)^{\frac{1}{2}} \\
        &\leq \left(\frac{1}{I}\sum\limits_{i \in \mc I} \| \bm x_i - \tilde{\bm x}_{i}\|^2 \right)^{\frac{1}{2}} +\left(\frac{1}{J}\sum\limits_{j \in \mc J} \| \bm y_j - \tilde{\bm y}_{j}\|^2 \right)^{\frac{1}{2}} \leq2\sqrt{K}\varepsilon,
\end{align*}
where the second inequality holds because the 2-Wasserstein distance is a metric and thus obeys the triangle inequality~\cite[\S~6]{villani}, whereas the third inequality holds because~$\pi^\mu$ and~$\pi^\nu$ with~$\pi^\mu_{ii'}=\frac{1}{I}\delta_{ii'}$ for all~$i,i'\in\mc I$ and~$\pi^\nu_{jj'}=\frac{1}{J}\delta_{jj'}$ for all~$j,j'\in\mc J$, respectively, are feasible transportation plans. Finally, the last inequality follows from our assumption that~$\|\bm x_i -\tilde{\bm x}_i\|_\infty\leq \varepsilon$ and~$\|\bm y_j -\tilde{\bm y}_j\|_\infty\leq \varepsilon$, which implies that
\[
    \| \bm x_i - \tilde{\bm x}_{i}\|^2\leq K \| \bm x_i - \tilde{\bm x}_{i}\|_\infty^2\leq K\varepsilon^2 \quad\text{and}\quad \| \bm y_j - \tilde{\bm y}_{j}\|^2\leq K \| \bm y_j - \tilde{\bm y}_{j}\|_\infty^2\leq K\varepsilon^2
\]
for all~$i\in\mc I$ and ~$j\in\mc J$. Substituting the above estimates back into~\eqref{eq:diff_wass_ineq1} finally yields~\eqref{eq:diff_wass_ineq}.
\end{proof}

We now address the approximate solution of optimal transport problems with independent marginals. 

\begin{theorem}[Approximate Solutions of the Optimal Transport Problem with Independent Marginals]
\label{theorem:approximation}
Suppose that $\mu = \otimes_{k \in \mathcal K} \mu_k$ with $\mu_k = \sum_{l \in \mc L} \mu_k^l \delta_{x_k^l}$ for every~$k\in\mc K$ and that $\nu_t = t \delta_{\bm y_1} + (1 - t) \delta_{\bm y_2}$, and let~$\varepsilon>0$ be an error tolerance. If $c(\bm x, \bm y) = \| \bm x - \bm y \|^2$ and if all probabilities and coordinates of the support points of~$\mu$ and~$\nu_t$ are representable as fractions of two integers with absolute values of at most~$U$, then the optimal transport distance between $\mu$ and $\nu_t$ can be computed to within an absolute error of at most~$\varepsilon$ by a dynamic programming-type algorithm using~$\mc O(KL\log_2(KL)+K^6U^8/\varepsilon^4)$ arithmetic operations. The bit lengths of all numbers computed by this algorithm are polynomially bounded in~$K$, $L$, $\log_2(U)$ and~$\log_2(\frac{1}{\varepsilon})$.
\end{theorem}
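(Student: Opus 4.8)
The plan is to round the supports of~$\mu$ and~$\nu_t$ onto a scaled integer lattice that is fine enough that the optimal transport distance changes by at most~$\varepsilon$ (by Lemma~\ref{lem:approximate-OT}), and then to compute the optimal transport distance of the rounded instance \emph{exactly} with the pseudo-polynomial-time algorithm of Corollary~\ref{corollary:tractable_example1}. Concretely: since every coordinate of every atom of~$\mu$ and~$\nu_t$ is a ratio of two integers of absolute value at most~$U$, all these atoms lie in~$[-U,U]^K$. Put~$M=\lceil 4K(U+1)/\varepsilon\rceil\in\mathbb N$ and round every coordinate of every atom to the nearest multiple of~$1/M$, leaving the probabilities~$\mu_k^l$ and~$t$ unchanged. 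The result is a product distribution~$\tilde\mu=\otimes_{k\in\mc K}\tilde\mu_k$ with~$\tilde\mu_k=\sum_{l\in\mc L}\mu_k^l\,\delta_{\tilde x_k^l}$, $\tilde x_k^l\in\mathbb Z/M$, and a two-point distribution~$\tilde\nu_t=t\,\delta_{\tilde{\bm y}_1}+(1-t)\,\delta_{\tilde{\bm y}_2}$ with~$\tilde{\bm y}_1,\tilde{\bm y}_2\in\mathbb Z^K/M$. By construction~$\|\tilde{\bm x}_i-\bm x_i\|_\infty\le\frac1{2M}$ for every atom~$\bm x_i$ of~$\mu$ and~$\|\tilde{\bm y}_j-\bm y_j\|_\infty\le\frac1{2M}$ for~$j\in\{1,2\}$, and all atoms of~$\mu,\tilde\mu,\nu_t,\tilde\nu_t$ lie in~$[-(U+1),U+1]^K$.

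Lemma~\ref{lem:approximate-OT}, applied with hypercube half-width~$U+1$ and perturbation bound~$\frac1{2M}$, then gives $|W_c(\mu,\nu_t)-W_c(\tilde\mu,\tilde\nu_t)|\le 8K(U+1)\cdot\frac1{2M}\le\varepsilon$, where the last inequality uses the choice of~$M$. Thus it suffices to compute~$W_c(\tilde\mu,\tilde\nu_t)$ exactly and report it as the desired~$\varepsilon$-approximation. (If rounding causes~$\tilde{\bm y}_1=\tilde{\bm y}_2$, then~$\tilde\nu_t$ is a single atom and~$W_c(\tilde\mu,\tilde\nu_t)=\mathbb E_{\bm x\sim\mu}[\|\bm x-\tilde{\bm y}_1\|^2]$ is read off in~$\mathcal O(KL)$ operations; otherwise we proceed as follows.)

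The rounded instance meets the hypotheses of Corollary~\ref{corollary:tractable_example1}: all coordinates lie in~$\mathbb Z/M$, and every integer in the description of~$(\tilde\mu,\tilde\nu_t)$ --- the numerators~$M\tilde x_k^l$ and~$M\tilde y_{j,k}$ (of absolute value at most~$M(U+1)$), the common denominator~$M$, and the integers of absolute value at most~$U$ describing the unchanged probabilities~$\mu_k^l$ and~$t$ --- is bounded in absolute value by~$\tilde U:=M(U+1)=\mathcal O(KU^2/\varepsilon)$, while trivially~$M\le\tilde U$. Invoking Corollary~\ref{corollary:tractable_example1} with~$U$ replaced by~$\tilde U$, Algorithm~\ref{algorithm:dynamic-programming} computes~$W_c(\tilde\mu,\tilde\nu_t)$ using $\mathcal O(KL\log_2(KL)+K^2\tilde U^4)=\mathcal O(KL\log_2(KL)+K^6U^8/\varepsilon^4)$ arithmetic operations, and --- via Theorem~\ref{theorem:wass_app_complexity} --- every number it produces has bit length polynomially bounded in~$K$, $L$ and~$\log_2(\tilde U)=\mathcal O(\log_2 K+\log_2 U+\log_2(1/\varepsilon))$, hence polynomially bounded in~$K$, $L$, $\log_2(U)$ and~$\log_2(1/\varepsilon)$. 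The preprocessing (forming~$M$ and the rounded coordinates) costs~$\mathcal O(KL)$ further operations and produces numbers of the same bit-length order, so it is absorbed into the above bound; this establishes the theorem.

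The one genuinely delicate step is the joint calibration of the single fineness parameter~$M$: it must be large enough for Lemma~\ref{lem:approximate-OT} to yield an error of at most~$\varepsilon$, yet the induced blow-up of the integer magnitudes to~$\tilde U=\Theta(KU^2/\varepsilon)$ --- and hence of the lattice grid to cardinality~$\Theta(K^2U^4/\varepsilon^2)$ --- must remain compatible with the~$K^2\tilde U^4$ operation count and the polynomial bit-length guarantee of Corollary~\ref{corollary:tractable_example1}. The remaining verifications --- that~$\tilde\mu$ and~$\tilde\nu_t$ still satisfy the structural hypotheses of Theorem~\ref{theorem:wass_app_complexity} after rounding, that~$M\le\tilde U$, and that~$K^2\tilde U^4$ is indeed the dominant term in the operation count --- are routine bookkeeping.
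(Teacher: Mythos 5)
Your proof is correct and follows essentially the same strategy as the paper: round the atoms of $\mu$ and $\nu_t$ onto the lattice $\mathbb Z^K/M$ with $M=\Theta(KU/\varepsilon)$, invoke Lemma~\ref{lem:approximate-OT} to bound the induced error by $\varepsilon$, and then apply Corollary~\ref{corollary:tractable_example1} with the enlarged magnitude bound $\tilde U=\Theta(KU^2/\varepsilon)$ to compute the rounded instance exactly. Your minor refinements---using the sharper rounding bound $1/(2M)$, enlarging the hypercube to half-width $U+1$ rather than observing that $\pm U$ are themselves lattice points, and explicitly treating the degenerate case $\tilde{\bm y}_1=\tilde{\bm y}_2$---are all sound and do not change the asymptotic bounds.
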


\begin{proof}
In order to approximate~$W_c(\mu,\nu_t)$ to within an absolute accuracy of~$\varepsilon$, we define~$M=\lceil 8KU/\varepsilon\rceil$ and map all support points of~$\mu$ and~$\nu$ to the nearest lattice points in~$\mathbb Z^K/M$ to construct perturbed probability distributions~$\tilde\mu$ and~$\tilde \nu_t$, respectively. Specifically, we construct~$\tilde x^l_k$ by rounding~$x^l_k$ to the nearest point in~$\mathbb Z/M$ for every~$k\in\mc K$ and~$l\in\mc L$. This requires~$\mc O(KL)$ arithmetic operations. We then define the perturbed marginal distributions~$\tilde\mu_k=\sum_{l\in\mc L} \mu^l_k\delta_{\tilde x^l_k}$ for all~$k\in\mc K$ and set~$\tilde\mu=\otimes_{k\in\mc K}\tilde\mu_k$. In addition, we denote by~$\tilde{\bm x}_i$, $i\in\mc I$, the~$I$ different support points of~$\tilde \mu$. Here, it is imperative to use the same orderings for the support points of~$\mu$ and~$\tilde \mu$, which implies that~$\|\bm x_i-\tilde{\bm x}_i\|_\infty\leq\frac{1}{M}\leq\frac{\varepsilon}{8KU}$ for all~$i\in\mc I$ thanks to the construction of~$\tilde \mu$. We further construct~$\tilde y_{j,k}$ by rounding~$y_{j,k}$ to the nearest points in~$\mathbb Z/M$ for every~$j\in\mc J=\{1,2\}$ and~$k\in\mc K$, and we define~$\tilde{\bm y}_j=(\tilde y_{j,1},\ldots, \tilde y_{j,K})$ for all~$j\in\mc J$. This construction requires~$\mc O(K)$ arithmetic operations and guarantees that~$\|\bm y_j-\tilde{\bm y}_j\|_\infty\leq\frac{1}{M}\leq \frac{\varepsilon}{8KU}$ for all~$j\in\mc J$. Finally, we introduce the perturbed two-point distribution~$\tilde \nu_t=t\delta_{\tilde{\bm y}_1}+(1-t)\delta_{\tilde{\bm y}_2}$. All support points of~$\mu$ and~$\nu$ have rational coordinates that are representable as fractions of two integers with absolute values at most~$U$. Therefore, $\mu$ and~$\nu$ are supported on~$[-U,U]^K$. Similarly, as~$U$ and~$M$ are integers, which implies that~$U$ is an integer multiple of~$\frac{1}{M}$, and as all support points of~$\tilde \mu$ and~$\tilde\nu$ are obtained by mapping the support points of~$\mu$ and~$\nu$ to the nearest lattice points in~$\mathbb Z^K/M$, respectively, the perturbed distributions~$\tilde \mu$ and~$\tilde\nu$ must also be supported on~$[-U,U]^K$. Lemma~\ref{lem:approximate-OT} therefore guarantees that~$|W_c(\mu, \nu_t) - W_c(\tilde \mu, \tilde \nu_t)|  \leq \varepsilon$.

In the remainder of the proof we will estimate the number of arithmetic operations needed to compute~$W_c(\tilde \mu, \tilde \nu_t)$. Note first that the coordinates of all support points of~$\tilde\mu$ and~$\tilde \nu_t$ are fractions of integers with absolute values of at most~$\tilde U=MU$. To see this, recall that~$x_k^l=a_k^l/b_k^l$ for some~$a_k^l\in\mathbb Z$ and~$b_k^l\in\mathbb N$ with~$|a_k^l|,|b_k^l|\leq U$. Using `round' to denote the rounding operator that maps any real number to its nearest integer, we can express~$\tilde x^l_k$ as~$\tilde a_k^l/\tilde b_k^l$ with~$\tilde a_k^l=\mathop{\text{round}}(Mx^l_k)\in\mathbb Z$ and~$\tilde b^l_k=M\in\mathbb N$. By construction, we have~$|\tilde a_k^l|\leq MU=\tilde U$ and~$\tilde b_k^l=M\leq\tilde U$ for all~$k\in\mc K$ and~$l\in\mc L$. Similarly, one can show that~$\tilde y_{j,k}$ is representable as a fraction of two integers with absolute values of at most~$\tilde U$ for all~$j\in\mc J$ and~$k\in\mc K$. As~$M\leq\tilde U$, $\tilde\mu$ and~$\tilde\nu$ thus satisfy all assumptions of Corollary~\ref{corollary:tractable_example1} with~$\tilde U$ instead of~$U$, respectively. From the proof of this corollary we may therefore conclude that~$W_c(\tilde \mu, \tilde \nu_t)$ can be computed in~$\mc O(KL\log_2(KL)+K^2\tilde U^4)$ arithmetic operations using Algorithm~\ref{algorithm:dynamic-programming}. As~$\tilde U=MU=\mc O(KU^2/\varepsilon)$, this establishes the claim about the number of arithmetic operations. From the definitions of~$\tilde U$ and~$M$ and from the analysis of Algorithm~\ref{algorithm:dynamic-programming} in Theorem~\ref{theorem:wass_app_complexity}, it is clear that the bit lengths of all numbers computed by the proposed procedure are indeed polynomially bounded in~$K$, $L$, $\log_2(U)$ and~$\log_2(\frac{1}{\varepsilon})$. This observation completes the proof.
\end{proof}

Theorem~\ref{theorem:approximation} shows that an $\varepsilon$-approximation of~$W_c(\mu, \nu_t)$ can be computed with a number of arithmetic operations that grows only polynomially with~$K$, $L$, $U$ and~$\frac{1}{\varepsilon}$ but exponentially with~$\log_2(U)$ and~$\log_2(\frac{1}{\varepsilon})$. By Remark~\ref{rem:dyn-prog-tractability}\,\emph{(iii)}, approximations of~$W_c(\mu,\nu_t)$ can therefore be computed in pseudo-polynomial time.


\vspace{1ex}
\textbf{Acknowledgements.} This research was supported by the Swiss National Science Foundation under the NCCR Automation, grant agreement~51NF40\_180545. In addition, the second author was supported by an Early Postdoc.Mobility Fellowship, grant agreement P2ELP2\_195149, and the last author was partially supported by the MOE Academic Research Fund Tier 2 grant T2MOE1906.
{\color{black} We are grateful two anonymous referees, whose insightful comments led to significant improvements of the paper.}
\bibliographystyle{plainnat} 
\bibliography{references}

\end{document}